\newtheorem{thm}{Theorem}[section]
\newtheorem{prop}[thm]{Proposition}
\newtheorem{lem}[thm]{Lemma}
\newtheorem{corl}[thm]{Corollary}
\newtheorem{example}[thm]{Example}
\newcommand{\eps}{\varepsilon}
\def \<{\langle}
\def \>{\rangle}
\def \R{\mathbb R}
\def \H{{\cal H}}
\def \H^0{{\cal H}^0 or}
\def \a{{\bf a}}
\def \n{\nabla}
\def \beq{\begin{equation}}
\def \eeq{\end{equation}}
\def \n{\nabla}
\def \eref{\eqref}
\begin{document}



\title{The spectrum of the Laplacian on forms over flat manifolds}

\author{Nelia Charalambous}
\address{Department of Mathematics and Statistics, University of Cyprus, Nicosia, 1678, Cyprus} \email[Nelia Charalambous]{nelia@ucy.ac.cy}

 \author{Zhiqin Lu} \address{Department of
Mathematics, University of California,
Irvine, Irvine, CA 92697, USA} \email[Zhiqin Lu]{zlu@uci.edu}

\thanks{
The first author was partially supported by a University of Cyprus Start-Up grant. The second author is partially supported by the DMS-1510232.}
 \date{\today}

  \subjclass[2010]{Primary: 58J50;
Secondary: 	53C35}

\keywords{essential spectrum, Hodge Laplacian, flat manifolds}

\begin{abstract}
In this article we prove that the spectrum of the Laplacian on $k$-forms over a noncompact flat manifold is always a connected closed interval of the nonnegative real line.  The proof is based on a detailed decomposition of the structure of flat manifolds.
\end{abstract}
\maketitle
\section{Introduction}

We consider the spectrum of the Hodge Laplacian $\Delta$ on differential forms of any order $k$ over a noncompact complete flat manifold $M$. It is well known that the Laplacian is a densely defined, self-adjoint and nonnegative operator on the space of $L^2$ integrable $k$-forms.  The spectrum of the Laplacian consists of all points $\lambda\in \mathbb{R}$   for which $\Delta-\lambda I$ fails to be invertible.  The  essential spectrum consists of the cluster points in the spectrum and of isolated eigenvalues of  infinite multiplicity. We will be denoting the spectrum of the Laplacian on $k$-forms over $M$ by  $\sigma(k,M)$ and its essential spectrum by $\sigma_\textup{ess}(k,M)$.  The complement of the essential spectrum  in $\sigma(k,M)$, which consists of  isolated eigenvalues of finite multiplicity, is called the pure point spectrum and is denoted by $\sigma_\textup{pt}(k,M)$. Since $\Delta$ is nonnegative, its  spectrum  is contained in the nonnegative real line.  The spectrum, the essential spectrum,  and the pure point spectrum  are  closed subsets of  $\mathbb R$.

When the manifold is compact, the essential spectrum is an empty set and the spectrum consists only of discrete eigenvalues. In the case of a noncompact complete manifold, on the other hand, a continuous part in the spectrum might appear.  Unlike the point spectrum, which in most cases cannot be explicitly computed,  the essential spectrum  can be  located either by the classical Weyl criterion as in \cite{Don81}, or by a generalization of it as we have shown in previous work \cite{char-lu-1}. Both criteria require the construction of a large class of test differential forms that act as generalized eigenforms.

Our main goal in this article is to compute the spectrum and essential spectrum of a general noncompact complete flat manifold $M^n$.  The main result of this paper is the following:
\begin{thm} \label{mainthm}
Let $M$ be a flat noncompact complete Riemannian manifold. Then
\[
\sigma(k,M) =\sigma_{\rm ess}(k,M) =  [\alpha, \infty)
\]
for some nonnegative constant $\alpha$.
\end{thm}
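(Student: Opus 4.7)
My plan combines three ingredients: a structural decomposition of $M$, Fourier-type analysis along a Euclidean direction, and the generalized Weyl criterion from \cite{char-lu-1}. The strategy is, for each $\lambda\geq\alpha:=\inf\sigma(k,M)$, to construct an $L^2$ Weyl sequence $\{\omega_j\}$ with $\|\omega_j\|_{L^2}=1$, $\|(\Delta-\lambda)\omega_j\|_{L^2}\to 0$, and supports exiting every compact set; this simultaneously places $\lambda\in\sigma_{\mathrm{ess}}(k,M)$ and yields $\sigma(k,M)=\sigma_{\mathrm{ess}}(k,M)=[\alpha,\infty)$.

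For the structure, by the Cheeger--Gromoll soul theorem $M$ is diffeomorphic to the normal bundle of a compact flat soul $S$. Lifting to $\R^n$, the soul corresponds to a $\Gamma$-invariant affine subspace of codimension $q\geq 1$, and $\Gamma=\pi_1(M)$ splits as a Bieberbach-type action on the soul direction $\R^{n-q}$ together with a holonomy representation $\rho:\Gamma\to O(q)$ on the normal fibers. Even when $\rho(\Gamma)$ is infinite, as in the irrational-screw example, the noncompact Euclidean fiber remains available for Fourier analysis, and the ``detailed decomposition'' promised in the abstract is the careful bookkeeping needed to exploit it.

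Given the structure, one takes plane-wave test forms $e^{i\xi\cdot y}\omega_0$ on the universal cover, with $\omega_0$ built from an eigenform on $S$ and constant $k$-form factors in the fiber direction, so that $\Delta(e^{i\xi\cdot y}\omega_0)=(\mu+|\xi|^2)e^{i\xi\cdot y}\omega_0$ for some base eigenvalue $\mu$. Symmetrize over $\Gamma$ (taking finite truncations when $\rho(\Gamma)$ is infinite), cut off by a bump $\chi_R$ supported in $B(y_j,R)\subset\R^q$ with $|y_j|\to\infty$, and normalize. A standard commutator estimate yields $\|(\Delta-\mu-|\xi|^2)\omega_j\|_{L^2}/\|\omega_j\|_{L^2}=O(R^{-1})$, placing $\mu+|\xi|^2\in\sigma_{\mathrm{ess}}(k,M)$. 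Since $\xi$ varies continuously over $\R^q$ with $q\geq 1$, each realized $\mu$ contributes the half-line $[\mu,\infty)$ to the spectrum, and their union is $[\alpha,\infty)$.

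The main obstacle is the symmetrization step when $\rho(\Gamma)$ is dense in $O(q)$: naive full symmetrization diverges, so one needs finite truncations with controlled error together with a genericity lemma ensuring the truncated and cut-off form is nonzero for an open set of frequencies $\xi$. Controlling this uniformly in $\xi$, so that the realized eigenvalues fill an entire interval rather than a measure-zero set, is the main place where a delicate analysis of the flat-manifold structure enters.
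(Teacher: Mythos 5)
Your outline has the right general flavor---reduce to a product-type model at infinity and run a Weyl-sequence argument---but it contains two genuine gaps, one structural and one analytic.

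First, you anchor the construction on the soul $S$ and its normal bundle, building test forms from eigenforms of $S$ and then ``symmetrizing'' over $\Gamma$. Example \ref{2.3} of the paper is designed precisely to show that the soul is the wrong object: for $\R^3/\Gamma$ with $\Gamma$ generated by an irrational glide rotation, the soul is $S^1$, but near infinity the manifold is modelled on $\R^3$, not on $S^1\times\R^2$; the group controlling the spectrum is not $\pi_1$ of the soul but a smaller subgroup (here trivial). The difficulty you flag at the end---that when the holonomy is infinite the symmetrized plane wave diverges and finite truncations no longer satisfy the eigenvalue equation, while the cut-off interacts with a dense orbit of translates---is not a technical nuisance to be patched by a ``genericity lemma''; it is the central problem of the theorem, and the paper's solution is to abandon the soul and symmetrization altogether. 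It constructs a subgroup $\Gamma_2\subset\Gamma$ (via Wolf's $\Gamma^*$, then $\Gamma_1$, $\Gamma^{**}$ and a second splitting $V=V_1\oplus V_2$) and proves in Lemma \ref{lemF4} and Theorem \ref{thmF1} that arbitrarily large metric balls of the product manifold $\R^n/\Gamma_2=Z\times\R^s$ embed isometrically into $M$ far from a basepoint, so that test forms are transplanted directly. Your sketch leaves exactly this step---the only genuinely hard step---unresolved.

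Second, your argument only yields the inclusion of the half-lines $[\mu,\infty)$ into $\sigma_{\rm ess}(k,M)$ for those base eigenvalues $\mu$ your construction realizes; to conclude $\sigma(k,M)=[\alpha,\infty)$ you must also show that no spectrum lies below the smallest realizable $\mu$, i.e.\ $\inf\sigma(k,M)\geq\alpha(Z,s,n,k)$. This is not automatic from declaring $\alpha:=\inf\sigma(k,M)$: if the smallest $\mu$ you reach were strictly larger than $\inf\sigma(k,M)$, your half-lines would miss the bottom of the spectrum and gaps could survive. The paper establishes this lower bound in Proposition \ref{PropF2} by pulling compactly supported test forms back through the covering $\R^n/\Gamma_2\to M$ and controlling the multiplicity of the lift via the polynomial growth of $\Gamma$ and Bishop--Gromov. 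Your proposal has no counterpart to this Rayleigh-quotient comparison, so even granting the Weyl-sequence construction, the claimed equality would not follow.
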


The constant $\alpha$ in the above theorem is the first eigenvalue on $\ell$-forms  ($\ell\leq k$) for some compact flat manifold that reflects the structure of $M$ at infinity. In Section \ref{S4} we will give a more precise description of it.

 The structure of a complete flat manifold may be understood through the results of Cheeger-Gromoll,  by which flat manifolds are diffeomorphic to  normal bundles over  totally geodesic compact submanifolds known as the soul.  It is also well known that a complete flat manifold is given as the quotient of Euclidean space  $\mathbb{R}^n$ by a discrete subgroup $\Gamma$ of the Euclidean group $E(n)$. The properties of $\Gamma$, when it is cocompact, are described by the Bieberbach Theorems (see \cites{ES,Wolf}).  We recall  that in order for the quotient to be a complete manifold, $\Gamma$ must be discrete and fixed point free. If $M$ can be decomposed as the direct product of a compact manifold $Z^{n-s}$ times a Euclidean space $\mathbb{R}^s$, then the spectrum of the Laplacian on forms can be computed as we show in Lemma \ref{lem81}. In this  case, the pure point spectrum is empty and the spectrum of the Laplacian on $k$-forms is a closed connected interval as in Theorem \ref{mainthm}.

A general flat manifold however, cannot always be decomposed in product form. Moreover, the classification of non-compact flat manifolds is far from being complete.  Thurston illustrates that the soul of the manifold is essentially the quotient of Euclidean space modulo an abelian subgroup of $\Gamma$, and proves that the manifold is a normal flat bundle over this compact flat submanifold \cite{thurs}.   Mazzeo and Phillips use this decomposition when computing the spectrum of the Laplacian on forms over hyperbolic manifolds in \cite{mazz}. However,  the normal bundle structure of a general flat manifold may be quite complicated, and the soul of the manifold might not provide us with neither sufficient nor useful information when computing the spectrum of the Laplacian on forms (see Example \ref{2.3}).

One of the main accomplishments of this article is to find an adequate subgroup $\Gamma_2$ of the group $\Gamma$, so that  the spectrum of the flat manifold $\mathbb{R}^n/\Gamma$ coincides with that  of $\mathbb{R}^n/\Gamma_2$. Our choice of $\Gamma_2$ will be such that $\mathbb{R}^n/\Gamma_2$ is a product manifold whose spectrum and essential spectrum can be  computed. Finding $\Gamma_2$ is the crux of our results.  Although we begin our analysis of $\Gamma$ by using the subgroup $\Gamma^*$  introduced in J. Wolf \cite{Wolf}, (the same group is also used by Thurston and Mazzeo-Phillips to describe the soul of the manifold \cites{thurs,mazz}) we need to take our decomposition one step further and a more detailed analysis  is necessary.  The details of this construction can be found in Section \ref{S3}.

Over a noncompact manifold the essential spectrum of the Laplacian on functions can present gaps  \cites{Lott2,Post1,Post2,SchoTr}. These manifolds do not have nonnegative Ricci curvature, but some of them  have bounded positive scalar curvature.   On the other hand, it is not uncommon for the spectrum of the Laplacian on forms to have gaps. In Example~\ref{2.1}, over certain even dimensional  quotients of hyperbolic space  $\mathbb{H}^{N+1}$,  for the half dimension $k=(N+1)/2$, the essential spectrum of the Laplacian on $k$-forms is not a connected set. Given all the above known examples,  the result of our paper is somewhat surprising. We show that the spectrum will always be a connected interval  for a  noncompact complete flat manifold.  In a forthcoming paper we will illustrate the significance of this result as it will become essential for  studying the spectrum of the Laplacian on forms over asymptotically flat manifolds.  The latter is a significantly more difficult task as can be seen by comparing the difference between compact flat manifolds and compact almost flat manifolds.

{\bf Acknowledgement.} The authors would like to thank V. Kapovich  and R. Mazzeo for their feedback and useful discussion regarding the structure of flat manifolds. They are also grateful to J. Lott for helping them  work out Example \ref{2.2}.

\section{The Spectrum of a Product Flat Manifold} \label{S2}

Let $M^n=Z^{n-s}\times \mathbb{R}^s$ be the  product of a compact manifold $Z^{n-s}$  of dimension $n-s$ and Euclidean space $\mathbb{R}^s$ of dimnension $s$  for some $1\leq s < n$,  endowed with the product metric.  We denote the Laplace operator on $Z$ by $\Delta_Z$ and on $\mathbb{R}^s$ by $\Delta_{\R^s}$. Since $Z$ is a compact manifold, the spectrum of $\Delta_Z$ on $\ell$-forms  is discrete and we denote by $\lambda_Z(\ell)$ the smallest eigenvalue\footnote{Note for example that $\lambda_Z(0)=0$, not the first nonzero eigenvalue of the compact manifold $Z$.} of $\Delta_Z$ on $\ell$-forms, for $0\leq \ell \leq n-s$.  Recall that  $\lambda_Z(\ell)= \lambda_Z(n-s-\ell)$  by  Poincar\'e duality.

Define $\alpha(Z,s,n,k)=0$ when $s\geq n/2$, or when $s<n/2$  and  either $0\leq k\leq s$, or $n-s\leq k\leq n$. Define $\alpha(Z,s,n,k)=\min\{\lambda_Z(\ell)\mid k-s\leq \ell\leq k\}$, when
$s+1\leq k\leq n/2$, and $\alpha(Z,s,n,k)=\alpha(Z,s,n,n-k)$, when $n/2< k\leq n-s-1$.

\begin{lem} \label{lem81}
Let $M=M^n=Z^{n-s}\times \mathbb{R}^s$ as above. Let $0\leq k\leq n$.
Then $\sigma_{\rm pt}(k, M)=\emptyset$, and
\[
\sigma(k,  M)=\sigma_{\rm ess}(k,  M)=[\alpha(Z,s,n,k),\infty).
\]
\end{lem}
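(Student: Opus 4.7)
The plan is to exploit the product structure directly. On $M = Z \times \R^s$ with the product metric, the bundle of $k$-forms splits orthogonally as $\L^k T^*M = \bigoplus_{p+q=k} \L^p T^*Z \otimes \L^q T^*\R^s$, and this decomposition extends to an $L^2$ splitting of sections. On each $(p,q)$-summand the Hodge Laplacian acts as $\Delta = \Delta_Z \otimes I + I \otimes \Delta_{\R^s}$, because on a metric product $d$ and $d^*$ split as sums of partial exterior derivatives on the two factors (equivalently, the Bochner formula produces no mixing since both factors are totally geodesic and $\R^s$ is flat).

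With this splitting in hand I would compute the spectrum via the spectral theorem for a sum of commuting self-adjoint operators. Since $Z$ is compact, $\Delta_Z$ on $p$-forms has discrete spectrum with an orthonormal eigenbasis $\{\phi_i^{(p)}\}$ and eigenvalues $\lambda_i^{(p)} \geq \lambda_Z(p)$; on $\R^s$, the Fourier transform (applied coefficientwise to the constant basis forms $dx^I$) shows that $\Delta_{\R^s}$ on $q$-forms has purely absolutely continuous spectrum equal to $[0,\infty)$. Consequently, on each $(p,q)$-summand the spectrum of $\Delta$ is $\{\lambda_i^{(p)} + |\xi|^2 : i \geq 0,\ \xi \in \R^s\} = [\lambda_Z(p), \infty)$, and taking the union over the admissible range $\max(0, k-s) \leq p \leq \min(k, n-s)$ gives $\sigma(k, M) = [\alpha_*, \infty)$ with $\alpha_* = \min\{\lambda_Z(p)\}$ over that range. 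To identify $\alpha_*$ with $\alpha(Z,s,n,k)$ I would use $\lambda_Z(0) = 0$ together with Poincar\'e duality $\lambda_Z(\ell) = \lambda_Z(n-s-\ell)$: a short case check shows $\alpha_* = 0$ precisely in the cases where the definition gives $0$, while otherwise the admissible range collapses to $[k-s, k]$, matching the formula for $s+1 \leq k \leq n/2$ directly and for $n/2 < k \leq n-s-1$ via the reindexing $\ell \mapsto n-s-\ell$.

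For the emptiness of the pure point spectrum, suppose $\Delta f = \mu f$ with $f \in L^2$. Expanding $f = \sum_{i,p} \phi_i^{(p)} \otimes h_{i,p}$ with $h_{i,p} \in L^2\L^{k-p} T^*\R^s$, the eigenvalue equation forces $\Delta_{\R^s} h_{i,p} = (\mu - \lambda_i^{(p)}) h_{i,p}$ for each $(i,p)$; but $\Delta_{\R^s}$ has no $L^2$ eigenforms (its Fourier transform would be supported on a sphere, a set of Lebesgue measure zero), so each $h_{i,p} = 0$ and hence $f = 0$. This yields $\sigma_{\rm pt}(k, M) = \emptyset$, and the absolute continuity of the Fourier factor likewise forces $\sigma(k, M) = \sigma_{\rm ess}(k, M)$. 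The main obstacle is not conceptual but rather bookkeeping: one must carefully realize $\Delta$ on $L^2$ $k$-forms as a genuine sum of commuting unbounded operators (checking that smooth compactly supported tensor-product forms constitute a common core) and then handle the case analysis matching $\alpha_*$ with the piecewise definition of $\alpha(Z,s,n,k)$.
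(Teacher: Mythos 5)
Your proposal is correct and follows essentially the same route as the paper: both rest on the splitting $\Delta_M=\Delta_Z\otimes 1+1\otimes\Delta_{\R^s}$ over the product, together with $\sigma(q,\R^s)=[0,\infty)$, which yields $\sigma(k,M)=\bigcup_{\ell}\left(\sigma(\ell,Z)+\sigma(k-\ell,\R^s)\right)$ and hence $[\alpha(Z,s,n,k),\infty)$. The only differences are cosmetic: you apply the tensor decomposition uniformly in all cases (using $\lambda_Z(0)=\lambda_Z(n-s)=0$ to absorb the ranges the paper dispatches via Poincar\'e duality and explicit compactly supported test forms), and your Fourier-transform argument for the absence of $L^2$ eigenforms makes precise the paper's brief appeal to translation invariance.
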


\begin{proof}
By Poincar\'e duality, we know that
\[
\sigma(k,M)=\sigma(n-k,M).
\]
Therefore whenever $s\geq n/2$, or when $s< n/2$ and either $0\leq k\leq s$ or $n-s\leq k\leq n$, we can use the duality to reduce computing the spectrum to the case
$0\leq k\leq s$. Let $x_1,\cdots, x_s$ be the coordinates on $\R^s$. Since $\sigma(0,\R^s)=[0,\infty)$,
it is well-known that for any $\lambda\in \R^+$ and any $\eps>0$, there is a function\footnote{A special choice of $f(x)$ is given in the proof of Proposition~\ref{PropF1}.}  $f\neq 0$ with compact support on $\R^s$ such that
\[
\|\Delta_{\R^s} f-\lambda f\|_{L^2}\leq \eps\|f\|_{L^2}.
\]
Using the test form
\[
\omega=f(x)\,dx_1\wedge\cdots\wedge dx_k
\]
on $\R^s$ and hence on $M=Z\times \R^s$, we have
\[
\|\Delta\omega-\lambda\omega\|_{L^2}\leq C\eps\|\omega\|_{L^2},
\]
where $C>0$ is a constant that only depends on the dimension $n$. It follows that
\[
\sigma(k,M)=[0,\infty).
\]

Now we assume $s< n/2$ and  $s+1\leq k\leq n/2$.  Since  $M=Z\times \R^s$, we have
\[
\Delta_M=\Delta_Z\otimes 1+1\otimes \Delta_{\R^s}.
\]
Therefore,
\[
\sigma(k,M)=\bigcup_{\ell= k-s }^k \left(\sigma(\ell, Z^{n-s})+\sigma(k-\ell,\R^s)\right).
\]
Since $\sigma(k-\ell,\R^s)$  is always $[0,\infty)$, we have
\[
\sigma(k,M)=[\alpha,\infty),
\]
where $\alpha=\alpha(Z,s,n,k)$.
Finally, the case $n/2<k\leq n-s-1$ follows from  Poincar\'e duality.

The result for the  essential spectrum of $M$ follows in a similar manner,  because  $M$ is translation invariant. Since the essential spectrum coincides with the spectrum, the pure point spectrum is an empty set. The translation invariance of $M$ also implies that the point spectrum, the set of eigenvalues of finite multiplicity, is an empty set.
\end{proof}

As we have mentioned, for the computation of the essential spectrum of the Laplacian we must consider the structure of the flat manifold with further detail. The example below illustrates why it is important to not simply consider the manifold as a flat bundle over its soul.
\begin{example}\label{2.3}
Consider $M=\mathbb{R}^3/\Gamma$ where $\Gamma$ is generated by a glide rotation that is composed of an irrational rotation in the $xy$-plane combined with a translation along the $z$ axis. Note that near infinity this flat manifold is essential isometric to $\mathbb{R}^3$, since its injectivity radius becomes infinite.

At the same time, the maximal abelian subgroup $\Gamma^*$  of $\Gamma$ is the translation group $\mathbb{Z}$ and the soul of the manifold is the  circle, $S^1$.  On the other hand, the group $\Gamma_2$ that we describe in the following section, consists only of the identity element. Therefore, the structure of $M$ at infinity is more accurately described by the quotient $\mathbb{R}^3/\Gamma_2$.

One can easily see from this example that  if the flat manifold is not a parallel normal bundle over the soul, then the soul need not determine its spectrum.
\end{example}

When the manifold has negative curvature, the spectrum of the Laplacian on forms can have gaps. This occurs even over hyperbolic space as the following example illustrates.
\begin{example}\label{2.1}
The essential spectrum of the Laplacian on forms over the hyperbolic space $\mathbb{H}^{N+1}$ is given by
\[
\sigma_{\mathrm ess} (k,\Delta) = \sigma_{\mathrm ess} (N+1-k,\Delta) =[\,(\frac{N}{2} - k)^2, \infty\,)
\]
for $0\leq k \leq \frac{N}{2}$, and whenever $N$ is odd
\[
\sigma_{\mathrm ess} (\frac{N+1}{2},\Delta) = \{0\} \cup [\,\frac{1}{4}, \infty\,).
\]
This result can be found in Donnelly~\cite{Don2}. Mazzeo and Phillips also show that the same result is true over hyperbolic manifolds (in other words quotients of hyperbolic space $\mathbb{H}^{N+1}/\Gamma$) that are geometrically finite and have infinite volume \cite{mazz}.
\end{example}

At the same time there was an extensive study of curvature conditions on the manifold so that the essential spectrum of the Laplacian on functions is the nonnegative real line \cites{Esc86,EF92,Lu-Zhou_2011,char-lu-1}. Escobar and Freire also found sufficient curvature assumptions so that the essential spectrum of the Laplacian on forms is $[0,\infty)$. The above Lemma gives however a very simple example of a flat manifold for which the spectrum of the Laplacian on 2-forms is not  $[0,\infty)$.
\begin{example} \label{2.2}
Consider the product manifold $M^4= F^3 \times \mathbb{R}$, where $F^3$ is the compact flat three-manifold constructed by Hantzsche and Wendt in 1935 with first Betti number zero (see \cite{Cobb} for a family of manifolds of any dimension $n\geq 3$ with the same property). Note that the second Betti number of $F^3$ is also zero by Poincar\'e duality. As a result, the first eigenvalue of the Laplacian on 1-forms, $\lambda_{F^3}(1)$, is strictly positive and in fact $\lambda_{F^3}(1)=\lambda_{F^3}(2)>0$.  The product manifold $M$ is a flat noncompact manifold. By  Lemma \ref{lem81}
\[
\sigma_{\mathrm ess} (k,\Delta)=[0,\infty) \qquad \text{for} \ \ k=0,1,3,4.
\]
However, since there do not exist any harmonic 1-forms nor harmonic 2-forms on $F$ then
\[
\sigma_{\mathrm ess} (2,\Delta)=[\lambda_{F^3}(1),\infty) \qquad \text{where}  \qquad \lambda_{F^3}(1)=\lambda_{F^3}(2)>0.
\]
In other words, its essential spectrum is smaller in half-dimension.
\end{example}

This example illustrates the stronger connection of the spectrum of the Laplacian on forms to the topology of the manifold and shows that sufficient conditions so that the form spectrum is $[0,\infty)$ must be stricter than for the case of functions.

\section{A Characterization of Noncompact Flat Manifolds on Large Sets} \label{S3}

We will now consider the general case of complete flat manifolds. Recall that a complete flat manifold is given as the quotient of Euclidean space by a discrete subgroup $\Gamma$ of the Euclidean group $E(n)$. As has already been mentioned, $\Gamma$ must be discrete and fixed point free in order for the quotient space to be a complete manifold.

Fixing a reference point on the Euclidean space each element of $E(n)$ can be uniquely represented by $(g,a)$ where  $g \in O(n)$ and $a \in \mathbb{R}^n$. The action of $(g,a)$ is given by $(g,a) x = gx+a$ for any $x\in\mathbb{R}^n$.  Note that $\R^n$ and hence any subspace of $\R^n$ can be embedded in $E(n)$ by $a\mapsto (1,a)$.  Define the homomorphism $\psi : E(n) \to O(n)$ by  $\psi(g,a) =g$.

We define $\Gamma^*$ to be the intersection of $\Gamma$ with the identity component of the closure of $\Gamma \cdot \mathbb{R}^n$.   By~\cite{Wolf}*{Theorem 3.2.8}, we know that  $\Gamma^*$ is a normal subgroup  $\Gamma$ of finite index and there exists a vector subspace $V \subset \mathbb{R}^n$ and a toral subgroup $T$ of $O(n)$ such that $T$ acts trivially on $V$, $\Gamma^* \subset T\cdot V$ and $\Gamma^*$ is isomorphic to a discrete uniform subgroup of $V$.

With $V$ defined as above we get the orthogonal decomposition
\[
\R^n=V_\perp\oplus V.
\]
Let
\[
\Gamma_1=\{(g,a)\in\Gamma\mid g\,\, {\rm  leaves }\,\, V_\perp\,\, {\rm invariant,  } \,\, i.e. \,\,g|_{V_\perp}=I_{V_\perp}\}
\]
and define
\[
\Gamma^{**}=\Gamma^*\cap\Gamma_1.
\]

\begin{lem} \label{lemF1}
$\Gamma_1$ is a subgroup of $\Gamma$, and $\Gamma^{**}$ is a normal subgroup of $\Gamma_1$ of finite index.  Moreover, $\Gamma^{**}$ is a subgroup of the translation group which acts on $V$; in other words it acts trivially on
$V_\perp$.
\end{lem}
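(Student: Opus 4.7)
The plan is to verify each of the three assertions in the order stated: $\Gamma_1$ is a subgroup, $\Gamma^{**}$ is a normal subgroup of finite index in $\Gamma_1$, and every element of $\Gamma^{**}$ is a pure translation by a vector of $V$.

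First I would show $\Gamma_1$ is a subgroup by direct computation in $E(n)$. For $(g_1,a_1),(g_2,a_2)\in\Gamma_1$, the product $(g_1g_2, g_1a_2+a_1)$ and the inverse $(g^{-1},-g^{-1}a)$ both have rotational parts that act as the identity on $V_\perp$, since the condition $g|_{V_\perp}=I_{V_\perp}$ is closed under composition and inversion. The intersection $\Gamma^{**}=\Gamma^*\cap\Gamma_1$ is then automatically a subgroup. For normality, I would use that $\Gamma^*$ is normal in $\Gamma$ (stated in the excerpt via Wolf's theorem). If $\gamma\in\Gamma_1\subset\Gamma$ and $h\in\Gamma^{**}$, then $\gamma h\gamma^{-1}\in\Gamma^*$ by normality of $\Gamma^*$ in $\Gamma$, and $\gamma h\gamma^{-1}\in\Gamma_1$ because $\Gamma_1$ is a subgroup, so $\gamma h\gamma^{-1}\in\Gamma^{**}$. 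The finite index follows from the standard injection
\[
\Gamma_1/(\Gamma^*\cap\Gamma_1)\hookrightarrow \Gamma/\Gamma^*,
\]
which gives $[\Gamma_1:\Gamma^{**}]\leq [\Gamma:\Gamma^*]<\infty$.

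The main content is the last assertion, which is where one must actually use the structure $\Gamma^*\subset T\cdot V$ supplied by Wolf's theorem. An arbitrary element of $\Gamma^{**}$ has the form $(t,v)$ with $t\in T$ and $v\in V$. Since $T$ acts trivially on $V$, we have $t|_V=I_V$; since the element also lies in $\Gamma_1$, we have $t|_{V_\perp}=I_{V_\perp}$. Combining the two and using the orthogonal decomposition $\R^n=V_\perp\oplus V$ forces $t=I$ on all of $\R^n$. Thus the element reduces to the pure translation $(I,v)$ with $v\in V$, which is exactly the statement that $\Gamma^{**}$ is contained in the translation group of $V$.

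I do not expect any serious obstacle: the first two claims are routine group-theoretic manipulations, and the third is a clean bookkeeping argument that extracts the rotational parts separately on $V$ and $V_\perp$. The only point requiring care is not to confuse the condition ``$g$ leaves $V_\perp$ invariant'' in the definition of $\Gamma_1$ with the stronger built-in condition $g|_{V_\perp}=I_{V_\perp}$; the rest of the argument follows the latter reading, which is what makes the final conclusion $t=I$ possible.
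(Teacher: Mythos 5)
Your proposal is correct and follows essentially the same route as the paper: closure of the condition $g|_{V_\perp}=I_{V_\perp}$ under the group operations, normality and finite index inherited from $\Gamma^*\trianglelefteq\Gamma$ via the intersection with $\Gamma_1$, and the observation that an element of $\Gamma^{**}$ has rotational part in $T$ (hence trivial on $V$) and trivial on $V_\perp$ (hence trivial everywhere), so it is a pure translation by a vector of $V$. You merely spell out the steps the paper compresses (e.g.\ the injection $\Gamma_1/(\Gamma^*\cap\Gamma_1)\hookrightarrow\Gamma/\Gamma^*$ and the explicit reason $\psi(\Gamma^{**})=\{I\}$), and your closing caveat about reading the definition of $\Gamma_1$ as $g|_{V_\perp}=I_{V_\perp}$ matches the paper's intent.
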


\begin{proof} Let $(g_1,a_1), (g_2,a_2)\in\Gamma_1$. Then
\[
g_1g_2|_{V_\perp}=I_{V_\perp},
\]
and hence $\Gamma_1$ is a subgroup. By ~\cite{Wolf}*{Theorem 3.2.8}, since $\Gamma^*$ is normal in $\Gamma$, then $\Gamma^{**}$ in also normal in $\Gamma_1$.
Moreover, we have
\[
[\Gamma_1,\Gamma^{**}]\leq[\Gamma, \Gamma^*]<\infty.
\]
Finally, by definition, $\psi(\Gamma^{**})$ is the identity matrix. Therefore $\Gamma^{**} \subset \Gamma^* $ is a  translation group acting on $V$.

\end{proof}

Define
\[
V_2={\rm span}\,(\Gamma^{**}),
\]
and let
\[
V=V_1\oplus V_2
\]
be the orthogonal decomposition of $V$. In this way, we write
\begin{equation}\label{decom-4}
\R^n=V_\perp\oplus V_1\oplus V_2.
\end{equation}
Define the subgroup $\Gamma_2$ of $\Gamma_1$ by
\[
\Gamma_2=\{(g,a)\in\Gamma_1\mid g\,\, {\rm  leaves }\,\, V_1\,\, {\rm invariant,  } \,\, i.e. \,\,g|_{V_1}=I_{V_1}\}.
\]
For any vector $a\in\R^n$, we decompose
\[
a=a_\perp+a_{=}=a_\perp+a_1+a_2,
\]
where $a_{\perp}\in V_\perp, a_=\in V, a_1\in V_1$ and $a_2\in V_2$. Let
\begin{equation}\label{decom-51}
d_\perp=\dim V_\perp, \ \ d_1=\dim V_1, \ \ d_2=\dim V_2.
\end{equation}
We also choose a coordinate system
\begin{equation}\label{decom-5}
a=(a^1,\cdots, a^{d_\perp}, a^{d_\perp+1},\cdots, a^{d_\perp+d_1},a^{d_\perp+d_1+1},\cdots, a^n)
\end{equation}
so as to be compatible with the decomposition~\eqref{decom-4}.  Here for $1\leq j\leq n$, $a^j$ is the $j$-th component of $a$.

Let $g\in O(n)$, we use $g_\perp$, $g_=$, $g_1$, and $g_2$ to represent the restrictions of the operator to $V_\perp$, $V$, $V_1$, and $V_2$, respectively. Note that since $g$ is orthogonal, if the restriction is also an orthogonal matrix, then the corresponding space and its orthogonal complement  are invariant spaces of $g$.

\begin{lem} \label{lemF2}
Let $(g,a)\in\Gamma$.  Then there exists a constant $C$ such that
\[
\|a_\perp\|\leq C.
\]
Moreover, if $(g,a)\in\Gamma_1$, then $a_\perp=0$.
\end{lem}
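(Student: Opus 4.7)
My plan is to exploit the finite-index decomposition $\Gamma/\Gamma^*$ together with the fact that $\psi(\Gamma)$ preserves the orthogonal decomposition $\R^n=V_\perp\oplus V$. The preservation is a consequence of Wolf's structure theorem: $V$ is canonically attached to $\Gamma^*$ (essentially the $\mathbb R$-span of the translation parts of $\Gamma^*$-elements), and since $\Gamma^*$ is normal in $\Gamma$, conjugation by any element of $\Gamma$ leaves $V$ invariant. Consequently, for every $(g,a)\in\Gamma$, the rotational part $g$ splits as $g_=\oplus g_\perp$ on $V\oplus V_\perp$, and the projection $\pi_\perp\colon E(n)\to E(V_\perp)$, $(g,a)\mapsto(g_\perp,a_\perp)$, restricts to a group homomorphism on $\Gamma$.

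For the first claim, observe that for any $(t,v)\in\Gamma^*$ we have $t\in T$ and $v\in V$, so $v_\perp=0$; hence $\pi_\perp(\Gamma^*)\subset O(V_\perp)\times\{0\}$, i.e.\ every element of $\pi_\perp(\Gamma^*)$ has vanishing translation component. Writing $\Gamma=\bigsqcup_{i=1}^N \gamma_i\,\Gamma^*$ with $\gamma_i=(h_i,b_i)$ and $N=[\Gamma:\Gamma^*]<\infty$, any $(g,a)\in\Gamma$ lies in some coset $\gamma_i\Gamma^*$, so $\pi_\perp((g,a))\in\pi_\perp(\gamma_i)\cdot\pi_\perp(\Gamma^*)$. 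Since the $\pi_\perp(\Gamma^*)$-factor contributes no translation, the translation part of $\pi_\perp((g,a))$ is exactly $b_{i,\perp}$. Setting $C:=\max_{1\le i\le N}\|b_{i,\perp}\|$ yields $\|a_\perp\|\le C$ as desired.

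For the second claim, assume $(g,a)\in\Gamma_1$, so $g_\perp=I_{V_\perp}$. Then $(g,a)^k=(g^k,\,\sum_{i=0}^{k-1}g^ia)\in\Gamma$ for every $k\ge 1$, and projecting to $V_\perp$ gives $\pi_\perp((g,a)^k)=(I_{V_\perp},\,k\,a_\perp)$ because $g_\perp^i=I_{V_\perp}$ for all $i$. Applying the uniform bound from the first claim to these iterates forces $\|k\,a_\perp\|\le C$ for every $k\ge 1$, whence $a_\perp=0$. The technically delicate point in this plan is justifying $gV\subset V$ for $g\in\psi(\Gamma)$; I would verify it by computing the translation part of the conjugate $(g,a)(t,v)(g,a)^{-1}\in\Gamma^*$, noting it must lie in $V$, and then using that the translation parts of $\Gamma^*$ span $V$ to force $gV\subset V$ by a short linear-algebra argument. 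Everything else is just bookkeeping with coset representatives.
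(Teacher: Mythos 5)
Your overall strategy is sound and your conclusions are correct, but there is a genuine gap at exactly the point you flag as delicate: the claim that $gV\subset V$ for every $g\in\psi(\Gamma)$. The statement is true, yet the justification you sketch does not close it. The translation part of the conjugate $(g,a)(t,v)(g,a)^{-1}$ is $(1-gtg^{-1})a+gv$, not $gv$; it does lie in $V$, but the unwanted term $(1-gtg^{-1})a$ cannot be discarded by linear algebra alone, and you cannot simply restrict to pure translations $t=1$: in Example~\ref{2.3} of the paper, $\Gamma^*=\Gamma$ is generated by a single glide rotation $(t,v)$ with $t$ an irrational rotation, so the only pure translation in $\Gamma^*$ is the identity even though $V=\mathrm{span}(v)$ is one-dimensional. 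One way to repair this is to pass to powers: since $t\in T$ acts trivially on $V\ni v$, one has $(t,v)^m=(t^m,mv)$, whose conjugate has translation part $(1-gt^mg^{-1})a+mgv\in V$; dividing by $m$ and letting $m\to\infty$ gives $gv\in V$, and such $v$ span $V$. Only after this does your $\pi_\perp$ become a homomorphism and your coset bookkeeping go through. Your first claim genuinely depends on this, because with your ordering $(g,a)=\gamma_i(t,v)$ one gets $a_\perp=(h_iv)_\perp+b_{i,\perp}$, and without the invariance the term $(h_iv)_\perp$ is unbounded as $v$ ranges over the lattice.

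The paper sidesteps the invariance question entirely by ordering the coset decomposition the other way: writing $(g,a)=(h,b)(g'_\alpha,a'_\alpha)$ with $(h,b)\in\Gamma^*$ on the left and $(g'_\alpha,a'_\alpha)$ from a finite set of representatives, one gets $a=ha'_\alpha+b$, hence $a_\perp=(ha'_\alpha)_\perp$ because $b\in V$, and this is bounded by $\max_\alpha\|a'_\alpha\|$ with no information needed about how $h$ moves $V$. You could adopt this and delete the invariance claim from the first part altogether. Your second part, by contrast, is fine as written and is a clean alternative to the paper's argument (which instead pushes $(g,a)^N$ into $\Gamma^{**}$ and uses that $\Gamma^{**}$ translates only along $V$): for $(g,a)\in\Gamma_1$ the rotational part fixes $V_\perp$ pointwise by definition, hence preserves the splitting, so the $V_\perp$-component of the translation part of $(g,a)^k$ really is $ka_\perp$, and the uniform bound from the first claim forces $a_\perp=0$.
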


\begin{proof} We first consider the case $(g,a)\in\Gamma_1$, and we write
\[
(g,a)=\left(\begin{pmatrix}1&0\\0&g_=\end{pmatrix},\begin{pmatrix}a_{\perp}\\a_{=}\end{pmatrix}\right),
\]
Since $\Gamma^{**}$ is of finite index in $\Gamma_1$, by  Lemma \ref{lemF1}, we have
\[
(g,a)^N=\left(\begin{pmatrix}1&0\\0&1\end{pmatrix},\begin{pmatrix}Na_{\perp}\\{*}\end{pmatrix}\right)\in\Gamma^{**}\subset\Gamma^{*}.
\]
for some positive integer $N$. By Lemma~\ref{lemF1}, $\Gamma^{**}$ is  a subgroup of the translation group that acts only on $V$, therefore $Na_\perp=0$; hence $a_\perp=0$.

Now we assume that $(g,a)\in\Gamma$. $\Gamma^*$ is normal and of finite index  in $\Gamma$. Therefore, there exists a finite set
$\{(g_\alpha', a_\alpha')\} \subset \Gamma$ such that
\[
(g,a)=(h, b)(g_\alpha', a_\alpha')
\]
for some $\alpha$ and $(h,b)\in \Gamma^*$. By~\cite{Wolf}*{Theorem 3.2.8}, we have
\[
a_\perp=(ha'_\alpha)_\perp +b_\perp=(ha'_\alpha)_\perp.
\]
Since the set $\{a'_\alpha\}$ is finite and $h \in O(n)$, we conclude that $\|a_\perp\|$ is bounded.
\end{proof}

By the same argument  we have
\begin{lem} \label{lemF2b}
Let $(g,a)\in\Gamma_1$. Then there exists a constant $C$ such that
\[
\|a_1\|\leq C.
\]
Moreover, if $(g,a)\in\Gamma_2$, then $a_1=0$.
\end{lem}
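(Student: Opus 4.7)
The plan is to carry out a formal transcription of the proof of Lemma \ref{lemF2}, with the role of the orthogonal decomposition $\R^n = V_\perp \oplus V$ played instead by the decomposition $V = V_1 \oplus V_2$ from \eqref{decom-4}, and the role of the pair $(\Gamma, \Gamma_1)$ played by $(\Gamma_1, \Gamma_2)$. The key structural input is that, by Lemma \ref{lemF1}, $\Gamma^{**}$ is a finite-index subgroup of $\Gamma_1$ consisting of pure translations; moreover by the definition $V_2 = {\rm span}(\Gamma^{**})$, every such translation vector lies in $V_2$, which is orthogonal to $V_1$ in $V$.

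For the stronger assertion that $a_1 = 0$ when $(g,a) \in \Gamma_2$, I would observe that $\Gamma^{**}$ has finite index in $\Gamma_2$ (since it already has finite index in $\Gamma_1 \supseteq \Gamma_2$), so some positive power satisfies
\[
(g,a)^N = \left(g^N,\ \sum_{j=0}^{N-1} g^j a\right) \in \Gamma^{**}.
\]
Because $(g,a) \in \Gamma_2$ forces $g|_{V_1} = I_{V_1}$, projecting the translation part onto $V_1$ gives exactly $N a_1$. On the other hand, the translation of any element of $\Gamma^{**}$ lies in $V_2$, which is perpendicular to $V_1$, so $N a_1 = 0$ and hence $a_1 = 0$.

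For the boundedness of $\|a_1\|$ when $(g,a) \in \Gamma_1$, I would choose finitely many coset representatives $\{(g'_\alpha, a'_\alpha)\}$ of $\Gamma^{**}$ in $\Gamma_1$ and write $(g,a) = (I,b)(g'_\alpha, a'_\alpha)$ with $(I,b) \in \Gamma^{**}$, so $b \in V_2$. The composition rule in $E(n)$ gives $a = b + a'_\alpha$, and projecting onto $V_1$ yields $a_1 = (a'_\alpha)_1$, which is uniformly bounded since the index set is finite. The only place that requires any care is the iteration formula for $(g,a)^N$: it uses $g|_{V_1} = I_{V_1}$, not merely $g$-invariance of $V_1$, to conclude that the $V_1$-component of the translation part is $N a_1$. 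This is exactly what the definition of $\Gamma_2$ provides, and is the reason this statement must be separated from Lemma \ref{lemF2} rather than proved simultaneously.
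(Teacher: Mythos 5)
Your proposal is correct and follows exactly the route the paper intends: the paper's own "proof" of Lemma \ref{lemF2b} is literally the remark "by the same argument," meaning the argument of Lemma \ref{lemF2} transcribed with $(\Gamma_1,\Gamma_2,V_1,V_2)$ in place of $(\Gamma,\Gamma_1,V_\perp,V)$, which is precisely what you carry out. Your added care about the iteration formula $(g,a)^N=(g^N,\sum_{j=0}^{N-1}g^j a)$ and the fact that $g|_{V_1}=I_{V_1}$ (not mere invariance) is what makes the $V_1$-component equal to $Na_1$ is a correct and worthwhile clarification, not a deviation.
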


\qed

By the above two lemmas, we have the decomposition
\[
\R^n/\Gamma_2=V_\perp\oplus V_1\oplus V_2/\Gamma_2.
\]
For a vector $x\in V_\perp\oplus V_1$ we write
\[
\tilde x=(x_\perp,x_1,0)\in\R^n.
\]

Let $\hat T$ be the closure $\overline{\psi(\Gamma)}$, where $\psi$ is the projection of $E(n)$ to $O(n)$ defined before. By~\cite{Wolf}*{Theorem 3.2.8}, $\hat T$ is a finite extension of the toral group $T$.

\begin{lem} \label{lemF3}
There exists an $x\in V_\perp\oplus V_1$ such that for all $g\in\hat T$,
\begin{enumerate}
\item If $(g\tilde x-\tilde x)_\perp=0$, then $g_\perp=I_{V_\perp}$;
\item If both $(g\tilde x-\tilde x)_\perp=0$ and $(g\tilde x-\tilde x)_1=0$, then both $g_\perp$ and $g_1$ are the identity.
\end{enumerate}
\end{lem}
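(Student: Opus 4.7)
The plan is to exhibit $x$ via a genericity argument that reduces the two conditions to showing certain ``bad sets'' in $V_\perp\oplus V_1$ are proper subvarieties. Writing $L_g(x):=(g\tilde x-\tilde x)_\perp$, one checks that $L_g$ is linear: $L_g(x)=A_g x_\perp+B_g x_1$ with $A_g:=P_\perp g|_{V_\perp}-I_{V_\perp}$ and $B_g:=P_\perp g|_{V_1}$, both continuous in $g\in\hat T$. An orthogonality argument shows that $A_g=0$ forces $g$ to fix $V_\perp$ pointwise, whence $g$ preserves $V$ and $B_g=0$ as well. Thus $L_g\equiv 0$ if and only if $g\in K_\perp:=\{g\in\hat T:g_\perp=I_{V_\perp}\}$, and condition~(1) becomes $x\notin B_1:=\bigcup_{g\in\hat T\setminus K_\perp}\ker L_g$. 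Condition~(2), assuming (1), reduces in the same way to $x$ lying outside a second bad set $B_2$ built from the $V_1$-projection of $g\tilde x-\tilde x$.

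The main step is to show $B_1$ is a proper subvariety of $V_\perp\oplus V_1$. Stratify $\hat T\setminus K_\perp$ by the rank of $L_g$. On the top-rank stratum where $\mathrm{rank}\,L_g=\dim V_\perp$, each fiber $\ker L_g$ has dimension $\dim V_1$, so its image under the second projection $\pi_2$ has dimension at most $\dim\hat T+\dim V_1$. Since $\hat T$ is a finite extension of the torus $T$ and $T\subset O(V_\perp)$ has $\dim T\le\lfloor\dim V_\perp/2\rfloor<\dim V_\perp$, this contribution is a proper subvariety of $V_\perp\oplus V_1$. The lower-rank strata are cut out by the additional algebraic condition that $g$ has a nonzero fixed vector in $V_\perp$, which reduces the stratum dimension in $\hat T$ by enough that its image in $V_\perp\oplus V_1$ is again proper. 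Summing, $B_1$ is proper, and we may choose $x\notin B_1$ satisfying condition~(1).

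The identical argument, now with $K_\perp$ (itself a compact Lie group that is a finite extension of a subtorus) in place of $\hat T$ and $V_1$ in place of $V_\perp$, shows that the bad set $B_2$ for condition~(2) is a proper subvariety, so any $x\notin B_1\cup B_2$ is the desired point. The main obstacle is making the rank-stratification estimate rigorous for the lower-rank strata: one must verify quantitatively that the constraint ``$g$ has a nonzero fixed vector in $V_\perp$'' contributes the right codimension in $\hat T$. This is the technical heart of the proof and uses both the semi-algebraic structure of $\hat T$ as a closed subgroup of $O(n)$ and the orthogonality of its elements together with the torus dimension bound noted above.
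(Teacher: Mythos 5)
Your reduction of condition (1) to avoiding the set $B_1=\bigcup_{g\in\hat T\setminus K_\perp}\ker L_g$ is correct, and the top-rank stratum is handled soundly: there $\dim\ker L_g=\dim V_1$ and $\dim\hat T=\dim T\le\lfloor \dim V_\perp/2\rfloor<\dim V_\perp$, so that stratum cannot sweep out $V_\perp\oplus V_1$. The gap is exactly where you place it, in the lower-rank strata, but it is worse than an unfinished verification: the codimension estimate you need (that the stratum $\{g:\mathrm{rank}\,L_g=\rho\}$ has dimension strictly less than $\rho$) is \emph{false} for a general compact group that is a finite extension of a torus in $O(d_\perp)$, which is all your argument uses about $\hat T$. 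Take $V_1=0$, $V_\perp=\R^2$, $T=SO(2)$, $\hat T=O(2)$: the nontrivial coset consists of reflections, a one-dimensional stratum on which $\mathrm{rank}\,L_g=1$, and the kernels $\ker L_g=\mathrm{Fix}(g)$ rotate through every line of $V_\perp$, so $B_1=V_\perp$ and no admissible $x$ would exist. Hence the deferred ``technical heart'' is not a computation to be carried out; to rule out such configurations one must feed in more structure of $\hat T=\overline{\psi(\Gamma)}$ (how the finitely many cosets of $\hat T/T$ interact with the $T$-isotypic blocks of $V_\perp$), and your proposal neither identifies nor uses any such input.

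The paper's proof sidesteps the continuum problem rather than confronting it by dimension counting. It fixes representatives $p_0,\dots,p_t$ of the finite group $\hat T/T$, sorts them according to whether some element of the coset $p_iT$ restricts to the identity on $V_\perp$ (respectively on $V_\perp$ and $V_1$), and chooses $x$ generically with respect to those finitely many linear conditions. The entire coset $p_iT$ is then controlled at once by a norm trick: in the coordinates in which $T$ is block-diagonal with rotation blocks $S_{\theta_i}$ on $V_\perp=P_1\oplus\cdots\oplus P_{r+1}$, every $h\in T$ preserves the norm of the projection of a vector to each $P_j$; so if $p_i\tilde x$ and $\tilde x$ differ in some block-norm, then so do $hp_i\tilde x$ and $\tilde x$ for every $h\in T$, forcing $(hp_i\tilde x-\tilde x)_\perp\neq 0$. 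This replaces your uncountable union of kernels by finitely many proper subspaces together with a $T$-invariant separating quantity, which is precisely the mechanism your stratification argument lacks. To salvage your route you would have to show that no coset of $\hat T/T$ can consist entirely of elements with varying nonzero fixed spaces in $V_\perp$, and establishing that amounts to the case analysis of the representatives that the paper performs.
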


The points $x$ described in the above lemma are \emph{sufficiently generic} in the sense that one can find them in any open set. The essence of the argument is the observation that $\hat T$ is   a finite extension of a toral group, and as a result it suffices to only consider the representatives of $\hat T$ over $T$. Even though the elements of $T$ may be uncountable, they are easier to control because they are diagonalizable under a fixed coordinate system.

\begin{proof}
We let $p_0,\cdots, p_t\in \hat T$ be the representatives of the group $\hat T/T$. We assume that
\begin{enumerate}
\item $p_0=I$;
\item $p_i|_{V_\perp}=I_{V_\perp}$ and $p_i|_{V_1}=I_{V_1}$ for $1\leq i\leq t_1$;
\item $p_i|_{V_\perp}=I_{V_\perp}$ but  $p_i|_{V_1}\neq I_{V_1}$; moreover, for all $h\in T$, $hp_i$ belongs to neither category (1) nor (2) above, for $t_1<i\leq t_2$;
\item $p_i|_{V_\perp}\neq I_{V_\perp}$; moreover, for all $h\in T$, $hp_i$ belongs to neither category (1), (2) nor (3) above for $t_2<i\leq t$.
\end{enumerate}
Obviously, there exists an $x\in V_\perp\oplus V_1$ such that
\begin{enumerate}
\item $(p_i\tilde x-\tilde x)_\perp\neq 0$ for $t_2<i\leq t$;
\item $(p_i\tilde x-\tilde x)_1\neq 0$ for $t_1< i\leq t_2$.
\end{enumerate}
By continuity, there exists a neighborhood $U$ of $x$ on which the above two conditions are still held. Therefore, without loss of generality, we assume that all the components of $x$ are not zero.

Since $T$ is the toral group, we may assume that under the coordinate system~\eqref{decom-5} for $\R^n$,  its elements can be represented by
\begin{equation}\label{pqr}
\begin{pmatrix}
S_{\theta_1}\\
&\ddots\\
&& S_{\theta_r}\\
&&& I
\end{pmatrix}
\end{equation}
where $S_{\theta_i}= \begin{pmatrix}
\cos\theta_i &\sin\theta_i\\
-\sin\theta_i& \cos\theta_i
\end{pmatrix}$, and  $2r\leq d_\perp$. We write
\[
V_\perp=P_1\oplus P_2\oplus\cdots\oplus P_r\oplus P_{r+1}
\]
according to the above representation. We prove by contradiction. Let $g$ be an element of $\hat T$ such that $(g\tilde x-\tilde x)_\perp=0$.
We assume that  $g=h p_i$ for some $h\in T$ and $i$. Assume that $i>t_2$. Then
 there exists an $1\leq j\leq r+1$ such that $P_j$ is not invariant under $p_i$. If $j\leq r$,  then
\[
((p_i\tilde x)^{2j-1})^2+((p_i\tilde x)^{2j})^2\neq ((\tilde x)^{2j-1})^2+((\tilde x)^{2j})^2.
\]
 As a result, the projection of
$(g\tilde x-\tilde x)_\perp=(hp_i\tilde x-\tilde x )_{\perp}$ to $P_j$ is not zero, which contradicts the assumption.
Similarly, if $P_{r+1}$ is not invariant  under $p_i$, then again $(g\tilde x-\tilde x)_\perp\neq 0$.
This proves the first part of the lemma.

Similarly, we can prove the case when both $(g\tilde x-\tilde x)_\perp$ and $(g\tilde x-\tilde x)_1$ are zero. The lemma is proved.

\end{proof}

\begin{lem}\label{lemF4}
Let $x$ be   as in the above lemma. Then
\[
\lim_{j\to\infty} \inf_{(g,a)\in\Gamma\backslash\Gamma_2}\|(g,a)(j\tilde x)-j\tilde x\| = \infty.
\]
\end{lem}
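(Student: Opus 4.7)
The plan is to argue by contradiction: suppose there exist $M > 0$, a sequence $j_k \to \infty$, and elements $(g_k, a_k) \in \Gamma \setminus \Gamma_2$ with $\|(g_k, a_k)(j_k \tilde x) - j_k \tilde x\| \le M$ for every $k$. The first step is to show that $\hat T$ preserves the orthogonal decomposition $\mathbb{R}^n = W \oplus V_2$, where $W := V_\perp \oplus V_1$. For any $(g,a) \in \Gamma$ and $(I, v) \in \Gamma^{**}$, conjugation gives $(g,a)(I,v)(g,a)^{-1} = (I, gv) \in \Gamma$; since the pure translation subgroup of $\Gamma$ lies in $\Gamma^*$, hence equals $\Gamma^{**}$, we get $gv \in V_2$, and as $\Gamma^{**}$ spans $V_2$ we conclude $gV_2 \subset V_2$. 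Taking closures gives $\hat{T} V_2 \subset V_2$, so each $g_k$ preserves $W$. Since $\tilde x \in W$, the $V_2$-component of the displacement equals $(a_k)_2$, forcing $\|(a_k)_2\| \le M$. Passing to a subsequence with $g_k \to g$ in compact $\hat T$, the $V_\perp$-component of the displacement $j_k(g_k\tilde x - \tilde x)_\perp + (a_k)_\perp$, combined with $\|(a_k)_\perp\| \le C$ from Lemma~\ref{lemF2}, yields $(g_k\tilde x - \tilde x)_\perp \to 0$, so $(g\tilde x - \tilde x)_\perp = 0$, and Lemma~\ref{lemF3}(1) gives $g_\perp = I_{V_\perp}$.

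The argument now splits into two cases. If $(g_k, a_k) \in \Gamma_1$ for infinitely many $k$, then by Lemma~\ref{lemF1}, $\Gamma^{**}$ has finite index in $\Gamma_1$, and since $\Gamma^{**} \subset \Gamma_2$, a further subsequence lies in a single coset $(g_0, a_0)\Gamma^{**}$ with $(g_0, a_0) \in \Gamma_1 \setminus \Gamma_2$. Writing $(g_k, a_k) = (g_0, a_0 + g_0 c_k)$ for $c_k \in V_2$, the rotation $g_0$ satisfies $g_0|_{V_\perp} = I$ and hence preserves $V$; combined with $g_0 V_2 \subset V_2$ it preserves $V_1$. Therefore $g_0 c_k \in V_2$ and $(a_k)_1 = (a_0)_1$ is constant. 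Setting $v_0 := g_0 x_1 - x_1 \in V_1$, the $V_1$-component of the displacement becomes $j_k v_0 + (a_0)_1$, which is bounded as $j_k \to \infty$ only if $v_0 = 0$, i.e.\ $g_0 x_1 = x_1$. Together with $g_0|_{V_\perp} = I$, Lemma~\ref{lemF3}(2) then forces $g_0|_{V_1} = I$, so $(g_0, a_0) \in \Gamma_2$ --- a contradiction.

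The remaining case, $(g_k, a_k) \notin \Gamma_1$ infinitely often, is the principal obstacle, because $\Gamma_1$ is in general not of finite index in $\Gamma$. Here I would use the finite-index decomposition $\Gamma = \bigsqcup_\alpha \gamma_\alpha \Gamma^*$, with $\gamma_\alpha = (g'_\alpha, a'_\alpha)$, and pass to a subsequence $(g_k, a_k) = \gamma_\alpha (h_k, b_k)$ with $(h_k, b_k) \in \Gamma^*$ and $h_k \to h$ in the compact torus $T$. Since $\Gamma^*$ is abelian and $(h, b) \mapsto b$ embeds it as a lattice $L \subset V$ with $L \cap V_2 = \Gamma^{**}$, one can choose $(b_k)_2$ bounded, while $(b_k)_1$ varies in a lattice inside $V_1$. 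The translational bound $\|(a_k)_\perp\| \le C$ constrains $\phi((b_k)_1)$, where $\phi: V_1 \to V_\perp$ is the linear map $v \mapsto (g'_\alpha v)_\perp$; if $\phi$ is injective, $(b_k)_1$ is bounded, a further subsequence makes $(g_k, a_k)$ eventually constant, and Lemma~\ref{lemF3}(2) applied to this constant value closes the argument as in the first case. If $\phi$ has a nonzero kernel, $(b_k)_1$ can grow only along $\ker \phi$, and the translation $(a_k)_1$ correspondingly may be unbounded. In this sub-case the $V_1$-component of the displacement must be analyzed very carefully, using the sufficient genericity of $x$ guaranteed by Lemma~\ref{lemF3}, to show that the putative cancellation between the growing $(a_k)_1$ and the rotational contribution $j_k(g_k\tilde x - \tilde x)_1$ cannot be maintained, again forcing the limit rotation to act trivially on $V_1$ and yielding the contradiction. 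The heart of the difficulty, and the step I expect to be the main obstacle, is precisely ruling out this unbounded translation scenario in which the rotation $g_k$ and translation $a_k$ might conspire to keep the displacement bounded.
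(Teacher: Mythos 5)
Your setup, your preliminary observations (that $\psi(\Gamma)$ preserves $V_2$ via conjugation of the pure translations, that $\|(a_k)_2\|$ is bounded, that $(g\tilde x-\tilde x)_\perp=0$ for the limit rotation), and your Case A (elements of $\Gamma_1\setminus\Gamma_2$, reduced modulo the finite-index subgroup $\Gamma^{**}\subset\Gamma_2$ and killed by Lemma~\ref{lemF3}(2)) are all correct and close in spirit to parts of the paper's argument. But the step you flag as the main obstacle --- ruling out a conspiracy between an unbounded translation part and a rotation acting nontrivially on $V_\perp$ --- is a genuine gap, and your proposed route through the map $\phi(v)=(g'_\alpha v)_\perp$ and its kernel is not carried out; as written the proof is incomplete precisely where the difficulty lies.

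The idea you are missing is to pass to the $N$-th power before decomposing: with $N$ the exponent killing $\Gamma/\Gamma^*$, set $(h_j,b_j)=(g_j,a_j)^N\in\Gamma^*$. Since $(g_j,a_j)$ is an isometry moving $\lambda_j\tilde x$ by at most $C$, its $N$-th power moves it by at most $NC$, so the same boundedness hypothesis transfers to $(h_j,b_j)$. The gain is structural: by Wolf's theorem $h_j\in T$ acts \emph{trivially on all of $V$}, so the $V$-component of the displacement of $\lambda_j\tilde x$ under $(h_j,b_j)$ is exactly $(b_j)_=$, with no rotational contribution at all --- the conspiracy you worry about cannot occur inside $\Gamma^*$. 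Hence $\|(b_j)_=\|\leq NC$, and Lemma~\ref{lemF2} bounds $\|(b_j)_\perp\|$, so $b_j$ is bounded; discreteness of $\Gamma^*$ then leaves only finitely many $(h_j,b_j)$, the approximate vanishing $(h_j\tilde x-\tilde x)_\perp\to 0$ becomes exact vanishing for large $j$, and Lemma~\ref{lemF3}(1) forces $(h_j)_\perp=I_{V_\perp}$, i.e.\ $(h_j,b_j)\in\Gamma_1\cap\Gamma^*=\Gamma^{**}$. This eliminates your problematic sub-case and reduces everything to elements with $g_j^N=I$; that residual case (which strictly contains your Case A, since $(g_j,a_j)$ itself need not lie in $\Gamma_1$) is then handled by writing $(g_j,a_j)=(g_j',a_j')(p_j,q_j)$ over the finitely many coset representatives of $\Gamma^*$, passing to convergent subsequences, and using $g_j^N=I$ together with Lemma~\ref{lemF3}(2) and Lemma~\ref{lemF2b} to force $g_j|_{V_\perp\oplus V_1}=I$ for large $j$. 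You should restructure your case split around $(g_j,a_j)^N$ rather than around membership of $(g_j,a_j)$ in $\Gamma_1$.
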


\begin{proof}
If the lemma is false,  then there exists a subsequence $\{\lambda_j\}$ of positive integers
\[
\lambda_j\to\infty, \ \  (g_j,a_j)\in\Gamma\backslash {\Gamma_2},
\]
such that
\begin{equation} \label{F4e0}
\|(g_j,a_j)(\lambda_j \tilde x)-\lambda_j\tilde x\|\leq C.
\end{equation}
For the same $N$ as in the proof of Lemma \ref{lemF1}, $(g_j,a_j)^N =(h_j,b_j)\in \Gamma^*$. We will consider two cases.

In the first case we assume $(h_j,b_j)\notin \Gamma_1$ for a subsequence of the $(h_j,b_j)$ for which we use the same notation. We have
\begin{equation*}
\|(h_j,b_j)(\lambda_j \tilde x) -\lambda_j \tilde x\| \leq NC
\end{equation*}
since $(g_j,a_j)$ is an isometry. By orthogonality we get
\begin{equation} \label{F4e1}
\|\lambda_j (h_j \tilde x - \tilde x)_\perp + (b_j)_\perp \|\leq NC
\end{equation}
and
\begin{equation} \label{F4e2}
\|\lambda_j (h_j \tilde x - \tilde x)_=  + (b_j)_= \|\leq NC.
\end{equation}

By Lemma \ref{lemF2}, the upper bound \eref{F4e1} is equivalent to
\[
\|\lambda_j (h_j \tilde x - \tilde x)_\perp \|\leq NC.
\]
Since $\lambda_j\to \infty$, we must have
\begin{equation} \label{F4e3}
\| (h_j \tilde x - \tilde x)_\perp \| \to 0
\end{equation}
as $j\to\infty$.
Given that $(h_j,b_j)\in \Gamma^*$ we know $(h_j \tilde x )_= = (\tilde x)_=$. Using \eref{F4e2} we get $\|(b_j)_=\| \leq NC$. Applying Lemma \ref{lemF2}  once again, we obtain the uniform upper bound
\[
\|b_j\|\leq \|(b_j)_\perp\|  + \|(b_j)_=\| \leq C
\]
for some constant $C$.
$\Gamma^*$ as well as $\Gamma$ are discrete groups, therefore there exist only finitely many $(h_j,b_j)$ with $b_j$ bounded. This implies that for sufficiently large  $j$ we have $(h_j \tilde x -\tilde x)_\perp=0$, which is only possible, by Lemma~\ref{lemF3}, when we have $(h_j)_\perp=I_{V_\perp}$. Therefore
$(h_j,b_j)\in \Gamma_1$ and we get a contradiction.

We now consider the case $(h_j,b_j)\in \Gamma_1$ for all sufficiently large $j$. This implies that $(h_j,b_j)\in \Gamma_1\cap \Gamma^* =\Gamma^{**}$. As a result $g_j^N=h_j = I_{\mathbb{R}^n}$. On the other hand, using~\eqref{F4e0}, we get
\begin{equation*}
\|\lambda_j (g_j \tilde x - \tilde x)_\perp \|\leq C
\end{equation*}
for all $j$, which again imples that
\begin{equation}\label{3-est}
\|(g_j \tilde x - \tilde x)_\perp \| \to 0
\end{equation}
as $j\to \infty$.

Since $\Gamma$ is finite over $\Gamma^*$, we can write
\[
(g_j,a_j)=(g_j',a_j')(p_j,q_j)
\]
for  $(p_j,q_j)\in\Gamma^*$ and finitely many $(g_j',a_j')$ . By passing to a subsequence if necessary, we assume that
\[
g_j\to g_\infty, \ \ g_j'\to g_\infty', \ \ p_j\to p_\infty.
\]
By~\eqref{3-est} and Lemma~\ref{lemF3}, we have $g_\infty|_{V_{\perp}}=I_{V_\perp}$.
Since $g_\infty=g_\infty'p_\infty$ and $V_\perp$ is an  invariant space for  $p_\infty$,  it is also an invariant subspace for  $g_\infty'$. However, since there exist only finitely many  $g_j'$, we must have $g_j'=g_\infty'$ for $j\gg 0$. Thus $V_\perp$ is an  invariant space for $g_j'$ for $j\gg 0$. As a result, $V_\perp$ is an invariant space for $g_j=g_j'p_j$ for $j\gg 0$. Since $g_j^N=I_{V_\perp}$ and $g_j$ satisfies ~\eqref{3-est}, we conclude that $g_j=I_{V_\perp}$ for $j\gg 0$.

By Lemma~\ref{lemF2b}, we also must have $(g_\infty \tilde x-\tilde x)_1=0$. Using Lemma \ref{lemF3} we get $g_\infty|_{V_1}=I_{V_1}$, and hence $g_\infty|_{V_\perp \oplus V_1}=I_{V_\perp \oplus V_1}$. Since $g_j^N=I_{\mathbb{R}^n}$ and $g_j\to g_\infty$ with $g_\infty|_{V_\perp \oplus V_1}=I_{V_\perp \oplus V_1}$, we must have $g_j|_{V_\perp \oplus V_1}=I_{V_\perp \oplus V_1}$ for $j\gg 0$. Thus $(g_j,a_j)\in \Gamma_2$, which is a contradiction.

\end{proof}

\begin{thm} \label{thmF1} Let $M=\mathbb{R}^n/\Gamma$ be a flat noncompact Riemannian manifold. Then there exists a compact flat manifold $Z$ of dimension $n-s$, such that for any sufficiently large real number $R>0$, there exists an isometric embedding
\[
Z\times B^{s}(R)\to \R^n/\Gamma,
\]
where $B^{s}(R)$ is a ball of radius $R$ in the Euclidean space $\R^{s}$.
\end{thm}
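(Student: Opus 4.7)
My plan is to use the natural covering map $\mathbb{R}^n/\Gamma_2 \to \mathbb{R}^n/\Gamma$ to transfer a large product region from $\mathbb{R}^n/\Gamma_2$ down to $M$, invoking Lemma \ref{lemF4} to guarantee that this map is injective on the chosen region.

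First I would establish that $\mathbb{R}^n/\Gamma_2$ is itself an isometric product. By Lemmas \ref{lemF1}, \ref{lemF2}, and \ref{lemF2b}, every $(g,a) \in \Gamma_2$ satisfies $g|_{V_\perp}=I$, $g|_{V_1}=I$, $a_\perp=0$, and $a_1=0$; hence $\Gamma_2$ fixes $V_\perp \oplus V_1$ pointwise and acts by Euclidean isometries on $V_2$. This action is free (inherited from the free action of $\Gamma$ on $\mathbb{R}^n$), properly discontinuous, and cocompact, since the translation lattice $\Gamma^{**}\subset \Gamma_2$ has span $V_2$ and finite index in $\Gamma_2$. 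Setting $s := d_\perp + d_1$ and $Z := V_2/\Gamma_2$ therefore produces a compact flat manifold of dimension $n-s$ together with an isometry $\mathbb{R}^n/\Gamma_2 \cong \mathbb{R}^s \times Z$, where the $\mathbb{R}^s$-factor is $V_\perp \oplus V_1$.

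Using the point $x \in V_\perp \oplus V_1$ supplied by Lemma \ref{lemF3} and the covering map $\pi : \mathbb{R}^n/\Gamma_2 \to \mathbb{R}^n/\Gamma$ (a local isometry, well-defined because $\Gamma_2 \subset \Gamma$, regardless of normality), I would define for each positive integer $j$
\[
\Phi_j : Z \times B^s(R) \longrightarrow \mathbb{R}^n/\Gamma, \qquad \Phi_j(z,v) \;=\; \pi(z,\, j\tilde x + v),
\]
under the product identification above. Each $\Phi_j$ is automatically a local isometry, so the theorem reduces to showing that $\Phi_j$ is injective for all $j$ sufficiently large in terms of $R$.

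For injectivity, fix a bounded fundamental domain $F \subset V_2$ for the $\Gamma_2$-action of diameter at most $D$, and lift points in $\Phi_j(Z \times B^s(R))$ to $p_i = j\tilde x + v_i + w_i \in \mathbb{R}^n$ with $v_i \in V_\perp \oplus V_1$, $\|v_i\|<R$, and $w_i \in F$. If $\Phi_j(z_1,v_1)=\Phi_j(z_2,v_2)$ then $p_2 = (g,a)\,p_1$ for some $(g,a)\in\Gamma$. The case $(g,a) \in \Gamma_2$ forces the two preimages to agree by the product structure, while the case $(g,a) \in \Gamma \setminus \Gamma_2$, together with $g \in O(n)$, yields
\[
\bigl\|(g,a)(j\tilde x)-j\tilde x\bigr\|\;\le\;\|p_2-p_1\|+\|g(v_1+w_1)-(v_1+w_1)\|\;\le\;4(R+D),
\]
a bound independent of $j$ and uniform in $(g,a)\in\Gamma\setminus\Gamma_2$. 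This contradicts Lemma \ref{lemF4} as soon as $j$ is large enough, so $\Phi_j$ is injective and the required isometric embedding is obtained. The principal difficulty is precisely this uniformity in $(g,a)$, which is already encoded in the $\lim_j \inf_{(g,a)}$ quantifier structure of Lemma \ref{lemF4}; with that lemma in hand, the remainder is essentially bookkeeping around the product decomposition established in the first step.
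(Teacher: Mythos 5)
Your proposal is correct and follows essentially the same route as the paper: take $Z=V_2/\Gamma_2$, use the product structure $\mathbb{R}^n/\Gamma_2\cong Z\times\mathbb{R}^s$, and invoke Lemma \ref{lemF4} to show the covering $\mathbb{R}^n/\Gamma_2\to\mathbb{R}^n/\Gamma$ is injective on a large region around $j\tilde x$. Your write-up actually supplies details (freeness and cocompactness of the $\Gamma_2$-action on $V_2$, and the explicit triangle-inequality bound forcing the contradiction with Lemma \ref{lemF4}) that the paper's very terse proof leaves implicit.
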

\begin{proof}
We take $Z=V_2/\Gamma_2$ which is compact given our choice of $\Gamma_2$.  Note that
 $s=d_\perp+d_1$. Choose any $C>0$ such that the diameter of $Z$ satisfies $\mathrm{diam}(Z)\leq C$. By Lemma \ref{lemF4} there exists a $\lambda >0$  such that
\[
\inf_{(g,a)\in\Gamma\backslash\Gamma_2}\|(g,a)(\lambda \tilde x)-\lambda\tilde x\| \geq 3C.
\]
This implies that the ball of radius $C/2$ of $\R^n/\Gamma_2$ can be isometrically embedded into $\R^n/\Gamma$.
\end{proof}

\section{Computation of the Spectrum} \label{S4}

The isometric embedding we have chosen in Theorem \ref{thmF1} will now allow us to compute the spectrum of the Laplacian on $k$-forms over the flat manifold. We will first show that this embedding implies that the essential spectrum contains a connected interval.
\begin{prop} \label{PropF1}
Let $M=\mathbb{R}^n/\Gamma$ be a flat noncompact Riemannian manifold. Then for any $0\leq k\leq n$,
\[
\sigma_{\rm ess} (k,M) \supset \sigma_{\rm ess}  (k, \mathbb{R}^n/\Gamma_2 ).
\]
\end{prop}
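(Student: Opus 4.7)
My strategy is to verify the inclusion by constructing a Weyl-type sequence. By the Weyl criterion, it suffices to exhibit, for each $\lambda \in \sigma_{\rm ess}(k,\mathbb R^n/\Gamma_2)$, a sequence of smooth compactly supported $k$-forms $\tilde\omega_j$ on $M$ with $\|\tilde\omega_j\|_{L^2}=1$, $\tilde\omega_j \rightharpoonup 0$ weakly in $L^2$, and $\|(\Delta-\lambda)\tilde\omega_j\|_{L^2}\to 0$. I will build these by transplanting approximate eigenforms on $\mathbb R^n/\Gamma_2=Z\times\mathbb R^s$ through the isometric embeddings $\iota\colon Z\times B^s(R)\hookrightarrow M$ supplied by Theorem \ref{thmF1}.

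For each $j$, since $\lambda\in\sigma_{\rm ess}(k,Z\times\mathbb R^s)$, I first produce (via the explicit product structure of Lemma \ref{lem81}) a smooth compactly supported $k$-form $\omega_j$ on $Z\times\mathbb R^s$ satisfying $\|\omega_j\|_{L^2}=1$, $\|(\Delta-\lambda)\omega_j\|_{L^2}<1/j$, and $\operatorname{supp}\omega_j\subset Z\times B^s(R_0(j))$ for some $R_0(j)$. Pick $R_j$ much larger than $R_0(j)$ so that Theorem \ref{thmF1} produces an isometric embedding $\iota_j\colon Z\times B^s(R_j)\to M$. Using the translation invariance of $Z\times\mathbb R^s$ in the $\mathbb R^s$-factor, shift $\omega_j$ by a vector $t_j\in\mathbb R^s$ with $|t_j|=R_j-R_0(j)-1$, yielding $\omega_j'(z,x)=\omega_j(z,x-t_j)$ whose support lies strictly inside $Z\times B^s(R_j)$ and which has the same $L^2$-norm and Laplacian estimate as $\omega_j$. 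Define $\tilde\omega_j$ on $M$ by transporting $\omega_j'$ along $\iota_j$ and extending by zero; since $\iota_j$ is isometric and $\Delta$ is a local operator, $\|\tilde\omega_j\|_{L^2(M)}=1$ and $\|(\Delta-\lambda)\tilde\omega_j\|_{L^2(M)}<1/j$.

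The remaining step is to establish the weak convergence $\tilde\omega_j\rightharpoonup 0$, which I expect to be the main obstacle. This is equivalent to showing that $\operatorname{supp}\tilde\omega_j$ eventually leaves every compact subset of $M$. Inside each isometric region, $\iota_j(z,x)$ lies at $M$-distance at least $|x|$ from $\iota_j(z,0)$, so the choice $|t_j|\to\infty$ pushes $\operatorname{supp}\tilde\omega_j$ to $M$-distance comparable to $R_j$ from $\iota_j(Z\times\{0\})$. To conclude, I will use the freedom afforded by the proof of Theorem \ref{thmF1} to center the embedding at the image $\pi(\lambda_j\tilde x)\in M$ for arbitrarily large $\lambda_j$, and invoke Lemma \ref{lemF4} to certify that no $\Gamma$-orbit identifies the transplanted supports with a bounded region; combining these, the supports drift to infinity in $M$ and the Weyl criterion produces the desired Weyl sequence, yielding $\sigma_{\rm ess}(k,\mathbb R^n/\Gamma_2)\subset\sigma_{\rm ess}(k,M)$.
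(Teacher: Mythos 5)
Your construction --- taking the explicit product approximate eigenforms of Lemma \ref{lem81} on $Z\times\mathbb{R}^s$, supporting them in a large ball, and transplanting them into $M$ through the isometric embedding of Theorem \ref{thmF1} --- is exactly the paper's argument, which uses annular cut-offs where you use translations in the $\mathbb{R}^s$-factor. The weak-nullity step you flag as the main obstacle is treated only implicitly in the paper as well (and can be sidestepped entirely: once the whole interval $\sigma(k,\mathbb{R}^n/\Gamma_2)=[\alpha,\infty)$ is shown to lie in $\sigma(k,M)$, none of its points is isolated in the spectrum, so the interval lies in $\sigma_{\rm ess}(k,M)$), so your proposal is correct and follows essentially the same route.
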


\begin{proof}
Note that $\mathbb{R}^n/\Gamma_2=Z \times \mathbb{R}^s$ where $Z=V_2/\Gamma_2$ and $V_2$ is the Euclidean space of dimension $n-s$.  Let $\lambda\in\sigma_{\rm ess}(k, \R^n/\Gamma_2)$. Then by the proof of Lemma~\ref{lem81}, the approximate eigenforms  can be chosen as
\[
\omega_1\wedge\rho\, e^{i\sqrt{\mu_1} r}\,\omega_2
\]
where $\omega_1$ is the first eigenform  corresponding to the smallest eigenvalue $\lambda_1$ of the Laplacian on $\ell$-forms for some $\ell$ on $Z=V_2/\Gamma_2$,  $r$ is the distance function to the origin on $\mathbb R^s$, $\omega_2=dx_1\wedge\cdots\wedge dx_{k-\ell}$, and $\rho$ is the standard cut-off function so that the approximation eigenform is of compact support within the annulus of radii $R-1$ and $2R+1$ for $R\gg 0$.  $\mu_1$ is chosen so that
\[
\lambda=\lambda_1+\mu_1.
\]
We choose $C>4R$ as in the proof of Theorem~\ref{thmF1}.  Then by a standard integral estimate $\rho\,  e^{i\sqrt{\mu_1} r} \omega_1 \wedge\omega_2 $ becomes the approximate eigenform for both $\mathbb R^n/\Gamma_2$ and  $M$ for any nonnegative real number $\mu_1$. This completes the proof of the proposition.
\end{proof}

We will now prove that the two spectra in Proposition \ref{PropF1} are in fact equal. To achieve this, it suffices to show the following result.
\begin{prop} \label{PropF2} Let $\lambda_o (k, X)$ denote the bottom of the Rayleigh quotient for the Hodge Laplacian on $k$-forms over a Riemannian manifold  $X$.

The following inequalities hold
\begin{align*}
&
\lambda_o (k, M) \geq \lambda_o (k, \mathbb{R}^n/\Gamma_2),\\
& \lambda^{\rm ess}_o (k, M) \geq \lambda^{\rm ess}_o (k ,\mathbb{R}^n/\Gamma_2)=\lambda_o (k ,\mathbb{R}^n/\Gamma_2).
\end{align*}
\end{prop}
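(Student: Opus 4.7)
The second displayed inequality follows formally from the first. Since $\sigma_{\rm ess}\subset\sigma$ in general, $\lambda_o^{\rm ess}(k,M)\geq\lambda_o(k,M)$, and by Lemma~\ref{lem81} the spectrum of $\mathbb{R}^n/\Gamma_2$ coincides with its essential spectrum, so that $\lambda_o^{\rm ess}(k,\mathbb{R}^n/\Gamma_2)=\lambda_o(k,\mathbb{R}^n/\Gamma_2)$. The task therefore reduces to establishing the first inequality $\lambda_o(k,M)\geq\lambda_o(k,\mathbb{R}^n/\Gamma_2)$.

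The plan is to treat the quotient map $\pi:\mathbb{R}^n/\Gamma_2\to M$ as a Riemannian covering and run a cutoff-and-compare argument on lifted forms. Observe first that $\Gamma_2$ is normal in $\Gamma$: by Wolf, $V$ is $\psi(\Gamma)$-invariant; since $\Gamma^{**}=\Gamma^*\cap\Gamma_1$ is an intersection of two $\Gamma$-normal subgroups (see Lemma~\ref{lemF1} and its proof), $V_2=\mathrm{span}\,\Gamma^{**}$ is $\psi(\Gamma)$-invariant, and hence so are $V_1$ and $V_\perp$. Consequently $\pi$ is a Galois covering with deck group $G=\Gamma/\Gamma_2$, which may be infinite, as in Example~\ref{2.3}. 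Given $\omega\in C^\infty_c(M,\Lambda^k)$, set $\tilde\omega=\pi^*\omega$; this is smooth and $G$-invariant on $\mathbb{R}^n/\Gamma_2=Z\times\mathbb{R}^s$, and since $\pi$ is a local isometry, $|\tilde\omega|=|\omega|\circ\pi$ pointwise, and analogously for $d\tilde\omega$ and $\delta\tilde\omega$. Pick a smooth cutoff $\chi_R:\mathbb{R}^s\to[0,1]$ equal to $1$ on $B^s(R-1)$, vanishing outside $B^s(R)$, with $|d\chi_R|\leq 2$, and extend it to $Z\times\mathbb{R}^s$ by pulling back under the second-factor projection. Then $\chi_R\tilde\omega$ is a valid test form on $\mathbb{R}^n/\Gamma_2$, and the variational principle gives
\[
\|d(\chi_R\tilde\omega)\|^2+\|\delta(\chi_R\tilde\omega)\|^2 \;\geq\; \lambda_o(k,\mathbb{R}^n/\Gamma_2)\,\|\chi_R\tilde\omega\|^2.
\]

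The crux is to show that as $R\to\infty$ the Rayleigh quotient on the left converges to $R(\omega)=(\|d\omega\|^2+\|\delta\omega\|^2)/\|\omega\|^2$. Expanding by Leibniz, $d(\chi_R\tilde\omega)=\chi_R\,d\tilde\omega+d\chi_R\wedge\tilde\omega$ and similarly for $\delta$. Writing $\pi^{-1}(\supp\omega)=\bigsqcup_{g\in G}g\cdot K_0$ with $K_0$ a compact fundamental lift of $\supp\omega$, each translate contributes $\|\omega\|^2_{L^2(M)}$ to $\int|\tilde\omega|^2$ and $\|d\omega\|^2+\|\delta\omega\|^2$ to $\int(|d\tilde\omega|^2+|\delta\tilde\omega|^2)$. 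The main terms therefore scale with the orbit count $N(R):=\#\{g\in G:g\cdot K_0\cap(Z\times B^s(R))\neq\emptyset\}$, while the error terms coming from $d\chi_R$ (together with the Cauchy--Schwarz cross terms) are supported in the annulus $Z\times(B^s(R)\setminus B^s(R-1))$ and are controlled by the shell count $N(R)-N(R-1)$. Because $\Gamma$ is virtually abelian by Bieberbach, so is $G$, and its properly discontinuous action on $\mathbb{R}^n/\Gamma_2$ has polynomial orbit growth: $N(R)\sim cR^m$ for some $m\geq 1$ (when $G$ is infinite), and $N(R)-N(R-1)=O(R^{m-1})$. Thus the error is $O(1/R)$ relative to the main contribution, and the Rayleigh quotient converges as claimed. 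When $G$ is finite, $\tilde\omega$ itself lies in $C^\infty_c(\mathbb{R}^n/\Gamma_2,\Lambda^k)$ and no cutoff is required.

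Taking the limit in the variational inequality and then the infimum over $\omega$ yields $\lambda_o(k,M)\geq\lambda_o(k,\mathbb{R}^n/\Gamma_2)$. I expect the main technical obstacle to be the orbit-counting step: one must verify uniformly in $R$ that the shell count $N(R)-N(R-1)$ is of genuinely lower order than $N(R)$, despite the fact that elements of $G$ may act on $\mathbb{R}^s$ through non-trivial rotations whose angles need not even be rational (as illustrated by Example~\ref{2.3}). This is exactly where the virtual abelianness of $G$ and the detailed product decomposition $\mathbb{R}^n/\Gamma_2=Z\times\mathbb{R}^s$ produced in Section~\ref{S3} are both indispensable, and why the argument would not generalize to arbitrary infinite Riemannian coverings.
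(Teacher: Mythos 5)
Your overall strategy is the same as the paper's: pull a compactly supported test form on $M$ back through the covering $\mathbb{R}^n/\Gamma_2\to M$, truncate with a cutoff, and show that the Rayleigh quotient is asymptotically preserved, the whole difficulty being to control the cutoff error against the number of lifts of $\supp(\omega)$ met by the relevant balls. Your formal reduction of the second inequality to the first, via $\lambda_o^{\rm ess}(k,M)\geq \lambda_o(k,M)$ and the equality $\sigma=\sigma_{\rm ess}$ for $\mathbb{R}^n/\Gamma_2$ from Lemma~\ref{lem81}, is valid and in fact tidier than the paper's remark that ``a similar method works.''

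The genuine gap is exactly the step you flag. Because your cutoff has unit width, you need the shell count $N(R)-N(R-1)$ to be of strictly lower order than $N(R)$, and you assert the much stronger asymptotic $N(R)\sim cR^m$ with $N(R)-N(R-1)=O(R^{m-1})$ without proof. This does not follow from ``polynomial growth of virtually abelian groups'': that is a statement about the word metric, and since the action of $G=\Gamma/\Gamma_2$ on $\mathbb{R}^n/\Gamma_2$ is not cocompact, there is no quasi-isometry identifying word length with displacement (Lemma~\ref{lemF4} shows the orbit becomes arbitrarily sparse near the points $j\tilde x$, so the counting function is highly sensitive to the basepoint). The claim is probably provable from the structure theory of Section~\ref{S3} --- the $\Gamma^*$-orbit of a point stays within bounded distance of a translate of a lattice in $V$, reducing matters to lattice-point counting with its standard $O(R^{m-1})$ boundary term --- but that argument is the entire analytic content of the proposition and is missing from your write-up. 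The paper sidesteps the issue: it cuts off between radii $R_1<R_2$ with $R_2-R_1\to\infty$, which kills the gradient error $C^2/(R_2-R_1)^2$ outright with no shell count needed, while keeping $R_2/R_1\to 1$, so that only the soft ratio bound $\xi(R_2)/\xi(R_1-D)\leq \left(R_2/(R_1-D)\right)^n\to 1$ from Bishop--Gromov volume comparison (available because the flat covering space has nonnegative Ricci curvature) is required. I suggest you adopt this widening-annulus device rather than trying to prove the sharp shell estimate. Two minor points: normality of $\Gamma_2$ in $\Gamma$ is not actually needed --- forms pull back through any covering, and one can count sheets over $\supp(\omega)$ instead of deck-group translates of a fundamental lift; and the paper replaces $\|d\omega\|^2+\|\delta\omega\|^2$ by $\|\nabla\omega\|^2$ using the vanishing of the Weitzenb\"ock term on a flat manifold, which makes the Leibniz-rule bookkeeping for the cutoff slightly cleaner.
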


A key component of the proof of this proposition is the fact that the group $\Gamma$ is of polynomial growth.
\begin{proof}
To simplify notation we denote $\lambda_o (k, M) = \lambda_o$. Then for any $\eps>0$, there exists a $k$-form $\omega \in {\mathcal{C}}_o^\infty (\Lambda^k (M))$ such that
\[
\frac{\int_M |\n \omega|^2}{ \int_M|\omega|^2} \leq \lambda_o +\eps
\]
since the manifold is flat and the Weitzenb\"ock tensor on $k$-forms vanishes.

 We assume without loss of generality that $ \int_M|\omega|^2=1.$

We consider the covering
\[
\mu: \mathbb{R}^n/\Gamma_2 \to \mathbb{R}^n/\Gamma =M.
\]

If $F=\mathrm{supp} \,(\omega)\ $ is the compact support of $\omega$, we let $\{F_j\}$ be the lift of $F$ in  $\mathbb{R}^n/\Gamma_2.$ We fix a point $y\in \mathbb{R}^n/\Gamma_2$ and denote $B_y(R)$ the ball of radius $R$ at $y$ in $\mathbb{R}^n/\Gamma_2$.  Let
\[
\xi(R) = \# \{ F_j \, \big| \, F_j \cap B_y(R) \neq \emptyset \, \}.
\]

Let $D$ be the diameter of $F_j$.    Let $\rho$ be a cut-off function such that
\begin{equation*}
\begin{split}
&  \rho  = 1 \ \ \text{on } B_y(R_1)\\
& \rho = 0  \ \ \text{outside} \ \ B_y(R_2) \\
& |\n \rho|\leq \frac{C}{R_2-R_1},
\end{split}
\end{equation*}
where $R_2>R_1$ are big numbers.

We consider the pull-back of the form $\omega$, $\eta =\mu^* \omega$, on $\mathbb{R}^n/\Gamma_2$. Then the form $\rho \eta$ is compactly supported in on  $\mathbb{R}^n/\Gamma_2$. For any $\eps_0>0$ we have
\[
\int_{\mathbb{R}^n/\Gamma_2} |\n (\rho \eta)|^2 \leq (1+\eps_0) \int_{\mathbb{R}^n/\Gamma_2}  \rho^2 \, |\n  \eta |^2 + \left(1+\frac{1}{\eps_0}\right) \int_{\mathbb{R}^n/\Gamma_2} |\n \rho|^2  \, |\eta|^2.
\]
Since $\mu$ is a local isometry we estimate
\[
\int_{\mathbb{R}^n/\Gamma_2}  \rho^2 \, |\n  \eta |^2 \leq \xi(R_2) \int_M  |\n  \omega |^2 \leq \xi(R_2) (\lambda_o + \eps)
\]
and
\[
\int_{\mathbb{R}^n/\Gamma_2} |\n \rho |^2  \, |\eta|^2 \leq \xi(R_2) \, \frac{C^2}{(R_2-R_1)^2} \; \int_M   | \omega |^2  =  \xi(R_2) \, \frac{C^2}{(R_2-R_1)^2}.
\]
Combining the above we get
\[
\int_{\mathbb{R}^n/\Gamma_2} |\n (\varphi \eta)|^2 \leq \left((1+\eps_0) (\lambda_o + \eps) + \left(1+\frac{1}{\eps_0}\right) \, \frac{C^2}{(R_2-R_1)^2} \right) \;  \xi(R_2).
\]
On the other hand,
\[
\int_{\mathbb{R}^n/\Gamma_2} |\rho \eta|^2 \geq   \xi(R_1-D)  \; \int_{M} |\omega|^2 =  \xi(R_1-D).
\]
Therefore,
\begin{equation*}
\frac{\int_{\mathbb{R}^n/\Gamma_2} \,|  \n(\rho  \eta) |^2}{\int_{\mathbb{R}^n/\Gamma_2} |\rho\eta|^2}
\leq \left((1+\eps_0) (\lambda_o + \eps) + \left(1+\frac{1}{\eps_0}\right) \, \frac{C^2}{(R_2-R_1)^2} \right)\frac{\xi(R_2)}{\xi(R_1-D)}.
\end{equation*}

By the Bishop-Gromov volume comparison theorem, we have
\[
\frac{\int_{\mathbb{R}^n/\Gamma_2} \,|  \n(\rho  \eta) |^2}{\int_{\mathbb{R}^n/\Gamma_2} |\rho\eta|^2}
\leq \left((1+\eps_0) (\lambda_o + \eps) + \left(1+\frac{1}{\eps_0}\right) \, \frac{C^2}{(R_2-R_1)^2} \right)\left(\frac{R_2}{R_1-D}\right)^n
\]
Choosing $R_1,R_2$ sufficiently large with $R_2/R_1\to 1$, and $\eps_0=(R_2-R_1)^{-1} \to 0$,   we obtain
\[
\lambda_o(k,\Delta, \R^n/\Gamma_2)\leq \lambda_o+o(1).
\]
The proposition is proved. A similar method works for the essential spectrum.
\end{proof}

As we mentioned in the proof of Theorem \ref{thmF1}, the quotient $Z=V_2/\Gamma_2$ is compact. The main theorem, Theorem \ref{mainthm} follows by combining this fact with Propositions \ref{PropF1}, \ref{PropF2} and Lemma \ref{lem81}. In fact, the bottom of the essential spectrum is determined by the eigenvalues of the compact space $Z=V_2/\Gamma_2$ depending on the order of the forms. Below we give a more detailed description of the bottom of the spectrum which is a consequence of Lemma  \ref{lem81}.
\begin{thm} \label{thmF2}
Let $M=\mathbb{R}^n/\Gamma$ be a flat noncompact Riemannian manifold. Let $Z$ be the compact flat manifold $Z=V_2/\Gamma_2$ of dimension $n-s$. Then
\[
\sigma(k, {\R^n}/{\Gamma}) =\sigma_{\rm ess}(k, {\R^n}/{\Gamma}) = \sigma_{\rm ess} (k,   Z^{n-s}\times  \R^{s})  = [\alpha(Z,s,n,k), \infty)
\]
where $\alpha(Z,s,n,k)$ is  as in Lemma \ref{lem81}.
\end{thm}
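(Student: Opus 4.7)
My plan is to assemble Theorem~\ref{thmF2} from three ingredients already in hand: Lemma~\ref{lem81}, Proposition~\ref{PropF1}, and Proposition~\ref{PropF2}. The first step is to identify the covering space $\mathbb{R}^n/\Gamma_2$ with the Riemannian product $Z \times \mathbb{R}^s$. This uses the orthogonal decomposition $\mathbb{R}^n = V_\perp \oplus V_1 \oplus V_2$ from \eqref{decom-4}, together with Lemmas~\ref{lemF2} and~\ref{lemF2b}, which force every element of $\Gamma_2$ to act as the identity on $V_\perp \oplus V_1$. Writing $s = d_\perp + d_1$ and identifying $V_\perp \oplus V_1$ with $\mathbb{R}^s$, one sees that $\Gamma_2$ acts only on the factor $V_2$, so $\mathbb{R}^n/\Gamma_2 = (V_2/\Gamma_2) \times \mathbb{R}^s = Z \times \mathbb{R}^s$, with $Z$ compact as recorded in the proof of Theorem~\ref{thmF1}.

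With this identification, Lemma~\ref{lem81} immediately yields
\[
\sigma(k, \mathbb{R}^n/\Gamma_2) = \sigma_{\rm ess}(k, \mathbb{R}^n/\Gamma_2) = [\alpha(Z,s,n,k), \infty),
\]
with empty pure point spectrum. This supplies both the target interval $[\alpha, \infty)$ and the middle equality in the statement of the theorem.

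To pin down $\sigma(k, M)$ exactly, I would combine the two remaining inputs. Proposition~\ref{PropF1} provides the inclusion $\sigma_{\rm ess}(k, M) \supset \sigma_{\rm ess}(k, \mathbb{R}^n/\Gamma_2) = [\alpha, \infty)$, while the second inequality of Proposition~\ref{PropF2} gives $\lambda_o(k, M) \geq \alpha$, so that $\sigma(k, M) \subset [\alpha, \infty)$. Chaining these inclusions,
\[
[\alpha, \infty) \subset \sigma_{\rm ess}(k, M) \subset \sigma(k, M) \subset [\alpha, \infty),
\]
forces every inclusion to be an equality, which also shows $\sigma_{\rm pt}(k,M)=\emptyset$. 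Since the genuine analytic and group-theoretic content has already been absorbed into Propositions~\ref{PropF1} and~\ref{PropF2} and into the construction of $\Gamma_2$ in Section~\ref{S3}, no new technical obstacle remains. The only conceptual point to verify is that the constant $\alpha$ appearing in the upper bound via Proposition~\ref{PropF2} and the lower bound via Proposition~\ref{PropF1} is the same; this is immediate from Lemma~\ref{lem81} applied to $Z \times \mathbb{R}^s$, where both reduce to the bottom of that spectrum.
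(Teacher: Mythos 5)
Your proposal is correct and follows essentially the same route as the paper, which likewise obtains Theorem~\ref{thmF2} by identifying $\mathbb{R}^n/\Gamma_2$ with $Z\times\mathbb{R}^s$ and combining Lemma~\ref{lem81} with Propositions~\ref{PropF1} and~\ref{PropF2}. You have in fact spelled out the chain of inclusions more explicitly than the paper does.
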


The flatness of the manifold now gives the following immediate corollary.
\begin{corl}
The same result is true for the covariant Laplacian.
\end{corl}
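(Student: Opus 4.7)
The approach is to reduce the statement for the covariant (Bochner) Laplacian directly to the already-proved statement for the Hodge Laplacian by means of the Weitzenb\"ock identity. On $k$-forms over any smooth Riemannian manifold one has
\[
\Delta\omega \;=\; \nabla^{*}\nabla\omega + \mathcal{R}_{k}\omega,
\]
where $\mathcal{R}_{k}$ is a zeroth-order endomorphism of $\Lambda^{k}T^{*}M$ built algebraically and linearly from the Riemann curvature tensor. Since $M=\mathbb{R}^{n}/\Gamma$ is flat, the Riemann tensor vanishes identically, so $\mathcal{R}_{k}\equiv 0$ and the two operators coincide pointwise on $\mathcal{C}_{c}^{\infty}(\Lambda^{k}M)$.

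From this identification it will follow that $\Delta$ and $\nabla^{*}\nabla$ have the same maximal and minimal domains in $L^{2}(\Lambda^{k}M)$, hence the same self-adjoint closure. Consequently their resolvent sets, spectra, essential spectra, and pure point spectra all coincide. Applying Theorem \ref{thmF2} then yields the conclusion of the corollary verbatim, with the same interval $[\alpha(Z,s,n,k),\infty)$.

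No serious obstacle is expected. In fact, the equality $\Delta=\nabla^{*}\nabla$ was already invoked implicitly in the proof of Proposition \ref{PropF2}, where the Rayleigh quotient for the Hodge Laplacian on $k$-forms was written as $\int_{M}|\nabla\omega|^{2}/\int_{M}|\omega|^{2}$ \emph{because} the Weitzenb\"ock tensor vanishes on a flat manifold. Thus the proof of the corollary will be a single line: cite the Weitzenb\"ock formula, observe that flatness kills the curvature endomorphism, and invoke Theorem \ref{thmF2}.
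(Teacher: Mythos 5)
Your proposal is correct and is essentially the paper's own (implicit) argument: the paper derives the corollary as an immediate consequence of flatness via the vanishing of the Weitzenb\"ock curvature term, exactly as you describe, and indeed already used $\Delta=\nabla^*\nabla$ in the proof of Proposition \ref{PropF2}. Nothing further is needed.
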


\begin{bibdiv}
\begin{biblist}

\bib{char-lu-1}{article}{
   author={Charalambous, Nelia},
   author={Lu, Zhiqin},
   title={On the spectrum of the Laplacian},
   journal={Math. Ann.},
   volume={59},
   date={2014},
   number={1-2},
   pages={211--238},
}

\bib{Cobb}{article}{
   author={Cobb, Peter V. Z.},
   title={Manifolds with holonomy group $Z_{2}\oplus Z_{2}$ and first
   Betti number zero},
   journal={J. Differential Geometry},
   volume={10},
   date={1975},
   pages={221--224},
   issn={0022-040X},
   review={\MR{0375147}},
}

\bib{dod}{article}{
   author={Dodziuk, Jozef},
   title={Eigenvalues of the Laplacian on forms},
   journal={Proc. Amer. Math. Soc.},
   volume={85},
   date={1982},
   number={3},
   pages={437--443},
}

\bib{Don81}{article}{
   author={Donnelly, Harold},
   title={On the essential spectrum of a complete Riemannian manifold},
   journal={Topology},
   volume={20},
   date={1981},
   number={1},
   pages={1--14},
}

\bib{Don}{article}{
   author={Donnelly, Harold},
   title={Spectrum of the Laplacian on asymptotically Euclidean spaces},
   journal={Michigan Math. J.},
   volume={46},
   date={1999},
   number={1},
   pages={101--111},
}

\bib{Don2}{article}{
   author={Donnelly, Harold},
   title={The differential form spectrum of hyperbolic space},
   journal={Manuscripta Math.},
   volume={33},
   date={1980/81},
   number={3-4},
   pages={365--385},
}

\bib{ES}{article}{
   author={Eschenburg, J.-H.},
   author={Schroeder, V.},
   title={Riemannian manifolds with flat ends},
   journal={Math. Z.},
   volume={196},
   date={1987},
   number={4},
   pages={573--589},
}

\bib{Esc86}{article}{
   author={Escobar, Jos{\'e} F.},
   title={On the spectrum of the Laplacian on complete Riemannian manifolds},
   journal={Comm. Partial Differential Equations},
   volume={11},
   date={1986},
   number={1},
   pages={63--85},
}

\bib{EF92}{article}{
   author={Escobar, Jos{\'e} F.},
   author={Freire, Alexandre},
   title={The spectrum of the Laplacian of manifolds of positive curvature},
   journal={Duke Math. J.},
   volume={65},
   date={1992},
   number={1},
   pages={1--21},
}

\bib{EF93}{article}{
   author={Escobar, Jos{\'e} F.},
   author={Freire, Alexandre},
   title={The differential form spectrum of manifolds of positive curvature},
   journal={Duke Math. J.},
   volume={69},
   date={1993},
   number={1},
   pages={1--41},
}

\bib{Post2}{article}{
   author={Lled{\'o}, Fernando},
   author={Post, Olaf},
   title={Existence of spectral gaps, covering manifolds and residually
   finite groups},
   journal={Rev. Math. Phys.},
   volume={20},
   date={2008},
   number={2},
   pages={199--231},
   issn={0129-055X},
   review={\MR{2400010}},
   doi={10.1142/S0129055X08003286},
}

\bib{Lott2}{article}{
   author={Lott, John},
   title={On the spectrum of a finite-volume negatively-curved manifold},
   journal={Amer. J. Math.},
   volume={123},
   date={2001},
   number={2},
   pages={185--205},
}

\bib{Lu-Zhou_2011}{article}{
   author={Lu, Zhiqin},
   author={Zhou, Detang},
   title={On the essential spectrum of complete non-compact manifolds},
   journal={J. Funct. Anal.},
   volume={260},
   date={2011},
   number={11},
   pages={3283--3298},
}

\bib{mazz}{article}{
   author={Mazzeo, Rafe},
   author={Phillips, Ralph S.},
   title={Hodge theory on hyperbolic manifolds},
   journal={Duke Math. J.},
   volume={60},
   date={1990},
   number={2},
   pages={509--559},
   issn={0012-7094},
   review={\MR{1047764}},
}

\bib{Post1}{article}{
   author={Post, Olaf},
   title={Periodic manifolds with spectral gaps},
   journal={J. Differential Equations},
   volume={187},
   date={2003},
   number={1},
   pages={23--45},
   issn={0022-0396},
   review={\MR{1946544}},
   doi={10.1016/S0022-0396(02)00006-2},
}

\bib{SchoTr}{article}{
   author={Schoen, Richard},
   author={Tran, Hung},
   title={Complete manifolds with bounded curvature and spectral gaps},
   journal={J. Differential Equations},
   volume={261},
   date={2016},
   number={4},
   pages={2584--2606},
   issn={0022-0396},
   review={\MR{3505201}},
   doi={10.1016/j.jde.2016.05.002},
}

\bib{thurs}{book}{
   author={Thurston, William P.},
   title={Three-dimensional geometry and topology. Vol. 1},
   series={Princeton Mathematical Series},
   volume={35},
   note={Edited by Silvio Levy},
   publisher={Princeton University Press, Princeton, NJ},
   date={1997},
   pages={x+311},
   isbn={0-691-08304-5},
   review={\MR{1435975}},
}

\bib{Wolf}{book}{
   author={Wolf, Joseph A.},
   title={Spaces of constant curvature},
   edition={6},
   publisher={AMS Chelsea Publishing, Providence, RI},
   date={2011},
   pages={xviii+424},
}

\end{biblist}
\end{bibdiv}

\end{document}